\documentclass{article}

\def\ParSkip{} 
\usepackage{amssymb,amsmath,amsthm,bbm}
\usepackage{verbatim,float,url,dsfont}
\usepackage{graphicx,subcaption,psfrag}
\usepackage{algorithm,algorithmic}
\usepackage{mathtools,enumitem}
\usepackage{multirow}
\usepackage{ragged2e}
\usepackage{xr-hyper}
\usepackage{array}

\usepackage[utf8]{inputenc} 
\usepackage[T1]{fontenc}    
\usepackage{booktabs}       
\usepackage{nicefrac}         
\usepackage{microtype}      

\usepackage[margin=1in]{geometry}
\usepackage[round]{natbib}
\usepackage[colorlinks=true,citecolor=blue,urlcolor=blue,linkcolor=blue]{hyperref}

\ifdefined\TimesFont 
\usepackage{times} 
\fi

\ifdefined\ParSkip 
\usepackage{parskip} 
\makeatletter
\def\thm@space@setup{%
  \thm@preskip=\parskip \thm@postskip=0pt
}
\makeatother
\fi

\newtheorem{theorem}{Theorem}

\newtheorem{corollary}{Corollary}
\newtheorem{proposition}{Proposition}
\theoremstyle{definition}
\newtheorem{remark}{Remark}

\newtheorem*{assumption*}{\assumptionnumber}
\providecommand{\assumptionnumber}{}


\makeatletter
\newcommand*\rel@kern[1]{\kern#1\dimexpr\macc@kerna}
\newcommand*\widebar[1]{%
  \begingroup
  \def\mathaccent##1##2{%
    \rel@kern{0.8}%
    \overline{\rel@kern{-0.8}\macc@nucleus\rel@kern{0.2}}%
    \rel@kern{-0.2}%
  }%
  \macc@depth\@ne
  \let\math@bgroup\@empty \let\math@egroup\macc@set@skewchar
  \mathsurround\z@ \frozen@everymath{\mathgroup\macc@group\relax}%
  \macc@set@skewchar\relax
  \let\mathaccentV\macc@nested@a
  \macc@nested@a\relax111{#1}%
  \endgroup
}
\makeatother

\DeclareMathOperator*{\minimize}{minimize}
\DeclareMathOperator*{\maximize}{maximize}

\DeclareMathOperator{\st}{subject\,\,to}

\DeclareMathOperator{\supp}{supp}

\DeclareMathOperator{\conv}{conv}

\def\E{\mathbb{E}}
\def\P{\mathbb{P}}
\def\R{\mathbb{R}}

\def\T{\mathsf{T}}

\def\hf{\hat{f}}

\def\hbeta{\hat{\beta}}

\def\th{^{\textnormal{th}}}

\def\cP{\mathcal{P}}
\def\cS{\mathcal{S}}

\def\cX{\mathcal{X}}
\def\cY{\mathcal{Y}}

\usepackage{stmaryrd} 
\def\zbag{\lbag z \rbag}
\def\Zbag{\lbag Z \rbag}

\def\d{\mathsf{d}}
\def\cQ{\mathcal{Q}}
\def\iid{\mathrm{iid}}
\def\X{\mathsf{X}}
\def\Y{\mathsf{Y}}
\def\cond{\mathrm{cond}}
\def\exch{\mathrm{exch}}
\DeclareMathOperator{\TV}{TV}

\title{Conformal Prediction Through the Lens of Hypothesis Testing:
  Universality, Impossibility, and Optimality}
\author{Ryan J.\ Tibshirani$^1$, Rina Foygel Barber$^2$, Aaditya Ramdas$^3$} 
\date{$^1$University of California, Berkeley, $^2$University of Chicago,
  $^3$Stanford University}

\begin{document}
\maketitle
\RaggedRight 

\begin{abstract}
The connections between conformal prediction and permutation tests are already
widely-known in the literature. Some authors motivate conformal prediction by 
saying that it computes a permutation p-value for the hypothesis $H_0 : Y_{n+1}
= y$, and then inverts this to form a prediction set for $Y_{n+1}$ (i.e.,
accepts all values $y$ into the prediction set for which the p-value is
large). In this paper, we examine an alternative view, which is less well-known:
we again cast conformal prediction via the inversion of a permutation test, but
for the null of exchangeability of the joint distribution of the $n+1$
samples. This change in perspective, while simple, adheres more closely to
traditional formalization in hypothesis testing, which offers several
benefits. First, we use the duality between conformal sets and testing to show
that foundational universality and impossibility results in the conformal
prediction literature can be reproduced directly using classical hypothesis
testing theory (due to Neyman, Lehmann, Scheff{\'e}, Kraft, Le Cam, and
others). Furthermore, we show that an optimality result for conformal prediction
can be derived using standard Neyman-Pearson theory: for any joint distribution
of the covariates and response $X,Y$, and any sample size, the optimal method
for prediction sets---which delivers the most efficient set among all methods
with valid coverage for exchangeable distributions---is a conformal predictor
whose score is the inverse conditional density of $Y|X$.      
\end{abstract}

\section{Introduction} 

Given independent and identically distributed (i.i.d.) samples
$Z_1,Z_2,\dots,Z_{n+1}$, where each $Z_i = (X_i,Y_i)$, many powerful tools have
been developed in statistics and machine learning over the years that enable us
to predict $Y_{n+1}$ based on $X_{n+1}$, and the first $n$ samples
$Z_1,\dots,Z_n$. Conformal prediction goes one step further: it is a framework
for producing a prediction set for $Y_{n+1}$ based on $X_{n+1}$ and
$Z_1,\dots,Z_n$. That is, conformal prediction is a general recipe for using
$X_{n+1}$ and $Z_1,\dots,Z_n$ to produce a set $C_n(X_{n+1})$ which satisfies, 
for a prespecified error level $\alpha \in [0,1]$,        
\begin{equation}
\label{eq:coverage}
\P\{Y_{n+1} \in C_n(X_{n+1})\} \geq 1-\alpha.
\end{equation}
Note that the probability statement in \eqref{eq:coverage} is over everything
that is random: $Z_1,\dots,Z_{n+1}$, and assumes nothing about the joint
distribution of each $(X_i,Y_i)$.       

Conformal prediction is typically at its best when the practitioner already has
a prediction model in mind that (they believe) works well for the problem at
hand, and a metric in mind which can be used to measure performance. In other
words, conformal prediction does not attempt to re-solve the prediction problem  
from scratch, but rather, it is a framework that is fundamentally about 
leveraging existing predictive capabilities, and carefully re-orienting them in
order to address the goal of uncertainty quantification.         

To be more precise, let $A$ be an algorithm which takes a sequence
$(z_1,\dots,z_k)$, of arbitrary length $k \geq 1$, and returns a predictor
\smash{$\hf = A(z_1,\dots,z_k)$} such that \smash{$\hf(x)$} is a prediction of
the value of $Y_i$ we expect to see when $X_i = x$. Also, let $\ell$ be a loss
function, where a lower value of \smash{$\ell(y, \hf(x))$} indicates that  
\smash{$\hf(x)$} is a more accurate prediction of $y$. With this notation in
place, conformal prediction works as follows: we first pick a \emph{trial value}
$y$ for $Y_{n+1}$, and we then train $A$ on the synthetically-augmented
sequence:   
\[
Z^y = (Z_1, \dots, Z_n, (X_{n+1}, y)),
\]
to produce a predictor $A(Z^y)$. The loss $\ell$ is used to measure the quality 
of these predictions, i.e., we compute what are known as conformal scores,  
\begin{equation}
\label{eq:cp_loss}
s^y_i = 
\begin{cases}
\ell(Y_i, A(Z^y)(X_i)) & \text{if $i \leq n$}, \\
\ell(y, A(Z^y)(X_{n+1})) & \text{if $i = n+1$}.
\end{cases}
\end{equation}
The conformal set $C_n(X_{n+1})$ is loosely defined as the set of all trial
values $y$ for which the last score \smash{$s^y_{n+1}$} is not ``too large''
relative to the other scores; when this happens the score \smash{$s^y_{n+1}$}
can be intuitively understood to ``conform'' to the behavior of the other
scores. Precisely,
\begin{equation}
\label{eq:cp_set}
C_n(X_{n+1}) = \Big\{ y : \text{$s^y_{n+1} \leq$ the $\lceil (n+1) (1-\alpha)
  \rceil\th$ smallest of $s^y_1, \dots, s^y_{n+1}$} \Big\}.   
\end{equation}
The standard guarantee backing conformal prediction is given next. It assumes an   
exchangeable sequence of data, which is less restrictive than assuming an
i.i.d.\ sequence, as we have done above.          

\begin{theorem}[Validity]
\label{thm:cp_validity}
Let $A$ be an algorithm which does not depend on the order of its input: namely,
for any $z_1,\dots,z_{n+1}$, it holds that
\begin{equation}
\label{eq:alg_symmetric}
A(z_1,\dots,z_{n+1}) = A(z_{\sigma(1)}, \dots, z_{\sigma(n+1)}), \quad \text{for
  any $\sigma \in \cS_{n+1}$},
\end{equation}
where $\cS_{n+1}$ is the set of permutations of $1,\dots,n+1$. Let $\ell$
be an arbitrary loss function. If $(Z_1,\dots,Z_{n+1})$ has an exchangeable
distribution, then the set defined in \eqref{eq:cp_loss}, \eqref{eq:cp_set}  
satisfies \eqref{eq:coverage}.      
\end{theorem}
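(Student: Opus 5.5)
The plan is the standard exchangeability argument. It reduces coverage to a statement about the rank of the last score among $n+1$ exchangeable scores.

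First, plug the true value $y = Y_{n+1}$ into \eqref{eq:cp_loss}. Then $Z^{Y_{n+1}} = (Z_1,\dots,Z_{n+1})$, and by \eqref{eq:cp_set}, $Y_{n+1} \in C_n(X_{n+1})$ exactly when $S_{n+1} := s^{Y_{n+1}}_{n+1}$ is at most the $k\th$ smallest of $S_1,\dots,S_{n+1}$. Here $S_i := s^{Y_{n+1}}_i$ and $k = \lceil (n+1)(1-\alpha) \rceil$.

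Next, write $S_i = g(Z_i; Z_1,\dots,Z_{n+1})$ with $g(z; z_1,\dots,z_{n+1}) = \ell(y, A(z_1,\dots,z_{n+1})(x))$ for $z=(x,y)$. By the symmetry assumption \eqref{eq:alg_symmetric}, the fitted predictor is a symmetric function of the full sequence. Hence for any $\sigma \in \cS_{n+1}$, the score vector computed from $(Z_{\sigma(1)},\dots,Z_{\sigma(n+1)})$ equals $(S_{\sigma(1)},\dots,S_{\sigma(n+1)})$. Exchangeability of $(Z_1,\dots,Z_{n+1})$ then gives $(S_{\sigma(1)},\dots,S_{\sigma(n+1)}) \overset{d}{=} (S_1,\dots,S_{n+1})$, so the score vector is itself exchangeable.

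Finally, let $E_i$ be the event that $S_i$ is strictly greater than the $k\th$ smallest of $S_1,\dots,S_{n+1}$; this is the miscoverage event when $i=n+1$. The event is defined by a function of the score vector that is equivariant under permutations, so exchangeability gives $\P(E_i) = \P(E_{n+1})$ for all $i$. Deterministically, at most $n+1-k$ indices can have score strictly exceeding the $k\th$ order statistic, and this holds even with ties. Therefore
\[
(n+1)\,\P(E_{n+1}) = \E\Big[\sum_{i=1}^{n+1} 1\{E_i\}\Big] \leq n+1-k \leq (n+1)\alpha,
\]
which yields \eqref{eq:coverage}. If $k > n+1$, which happens only when $\alpha < 1/(n+1)$, the $k\th$ smallest is interpreted as $+\infty$ and coverage is trivial.

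The main obstacle is the middle step: verifying carefully that permuting the data permutes the scores and nothing more. In particular, the score of the test point must be computed by the same function $g$ as the training scores, and this is exactly what \eqref{eq:cp_loss} gives at $y = Y_{n+1}$. Ties in the last step also need care, which is why the counting bound uses strict exceedance.
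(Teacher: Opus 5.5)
Your proof is correct, but it takes a different route from the paper.

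The paper never reasons about ranks of exchangeable scores directly. It sets $T(z) = s(z_{n+1}; z)$ and uses the symmetry \eqref{eq:score_symmetric} to collapse the $(n+1)!$-term permutation p-value \eqref{eq:perm_pval} to $\frac{1}{n+1}\sum_{i=1}^{n+1} 1\{s^y_i \geq s^y_{n+1}\}$. It then identifies the inverted set \eqref{eq:perm_inversion_symmetric} with \eqref{eq:cp_set}. Coverage follows from the classical validity of permutation tests over exchangeable laws, together with the duality in Proposition~\ref{prop:duality}.

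The main conceptual difference is where symmetry enters. In your argument, symmetry is what makes the score vector exchangeable, so it drives validity. In the paper, permutation-test validity holds for an arbitrary statistic, and symmetry only reduces the general set \eqref{eq:perm_inversion_general} to the rank form \eqref{eq:cp_set}. This is what lets the paper conclude that exchangeability of the data, not of the scores, is the essential ingredient. The same test-inversion framing is what the later universality and optimality results are built on.

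Your route, on the other hand, is self-contained and elementary. It works directly with the order-statistic form and handles ties explicitly through strict exceedance. It needs neither the cited permutation-test theorem nor the equivalence of \eqref{eq:cp_set} and \eqref{eq:perm_inversion_symmetric}, which the paper takes as known.

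One small slip: the order statistic in \eqref{eq:cp_set} ranges over all $n+1$ scores, including the test score. Hence $k = \lceil (n+1)(1-\alpha) \rceil \leq n+1$ for every $\alpha \in [0,1]$, and the case $k > n+1$ never occurs. For $\alpha < 1/(n+1)$ one simply has $k = n+1$, which your bound $n+1-k = 0$ already covers. You may be thinking of split conformal, where the quantile is taken over only $n$ calibration scores.
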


\begin{remark}\it
\label{rem:cp_validity}
The scores do not need to be defined using an algorithm and a loss function, as
in \eqref{eq:cp_loss}. In fact, we can consider scores more generally defined by      
\begin{equation}
\label{eq:cp_score}
s^y_i = 
\begin{cases}
s((X_i,Y_i); Z^y) & \text{if $i \leq n$}, \\
s((X_{n+1}, y); Z^y) & \text{if $i = n+1$},
\end{cases}
\end{equation}
for a score function $s$ satisfying a generalization of
\eqref{eq:alg_symmetric}: for any $z = (z_1,\dots,z_{n+1})$ and any  $(x,y)$, 
\begin{equation}
\label{eq:score_symmetric} 
s((x,y); z) = s((x,y); z_\sigma), \quad \text{for any $\sigma \in \cS_{n+1}$},   
\end{equation}
where we denote \smash{$z_\sigma = (z_{\sigma(1)}, \dots, z_{\sigma(n+1)})$}. 
Then under the symmetry property \eqref{eq:score_symmetric}, the conformal set 
defined using \eqref{eq:cp_set}, \eqref{eq:cp_score} achieves coverage
\eqref{eq:coverage}. We will provide a proof of this in Section \ref{sec:cp_perm}. 
\end{remark}

Conformal prediction was pioneered by Vladimir Vovk and collaborators in the
1990s. In the last ten or so years, there has been an explosion of interest in
conformal prediction from the statistics and machine learning communities. The
literature on conformal and related topics is now vast, and we do not attempt to
provide an overview. Instead, we refer to the books by
\citet{vovk2022algorithmic, angelopoulos2025theoretical}, which provide an 
excellent treatment of the foundations as well as many of the newer
developments. In recent work \citep{barber2026unifying}, we showed that
conformal prediction and a number of recent extensions 
can be cast in a unified framework which uses the language of hypothesis
testing. This helps us understand how and why these methods work, and fluidly
leads to new combinations and generalizations.    

The current paper is not about generalizations of conformal prediction, but
about the basic method itself: we return to the connection between conformal
prediction and hypothesis testing from first principles, and examine what the
latter can teach us about the former. We find this to be fruitful in three
directions. We show that well-known results about \emph{universality} of
conformal prediction (Section \ref{sec:universality}), and \emph{impossibility}
of conditional coverage (Section \ref{sec:impossibility}), are consequences of
classical results in hypothesis testing (due to Neyman, Lehmann, Scheff{\'e},
Kraft, Le Cam, and others). Furthermore, the connections to hypothesis testing
enable us to draw conclusions about \emph{optimality} (Section
\ref{sec:optimality}): as an application of Neyman-Pearson theory, we show that
for any joint distribution of the covariates and response $X,Y$, and any sample
size $n$, the optimal method---delivering the most efficient prediction set in
an average sense, among all methods which have valid coverage over exchangeable
distributions---is a conformal predictor whose score function is the inverse
conditional density of $Y|X$.

Before covering these results, we begin by describing the general duality
between prediction sets and hypothesis tests, and explain how this connects
conformal prediction to permutation tests in particular.

\section{Prediction sets and hypothesis tests}

The duality between confidence sets and hypothesis tests, which dates back to 
\citet{neyman1937outline}, is a standard and widely-used result in classical
statistical inference. Less well-known perhaps is the parallel duality
between prediction sets and hypothesis tests. This dates back to at least
\citet{cox1975prediction}, but related ideas were around much earlier, such as 
tolerance regions \citep{wilks1941determination} and fiducial inference
\citep{fisher1935fiducial}. 

To fix notation, let $Z = (Z_1,\dots,Z_{n+1})$, and consider a null hypothesis
$H_0 : Z \sim P$. Let $\phi$ be a valid level $\alpha$ test for $H_0$, i.e., it
satisfies $\phi(z) \in \{0,1\}$ for each $z$, and
\[
\E_P[\phi(Z)] = \P_P\{\phi(Z) = 1\} \leq \alpha.
\]
We can invert this to form a prediction set for $Z_{n+1}$ based on
$Z_1,\dots,Z_n$, with coverage under $P$: letting 
\begin{equation}
\label{eq:gen_inversion}
C_n = \{ z : \phi(Z_1,\dots,Z_n,z) = 0 \}, 
\end{equation}
it follows that $\P_P\{Z_{n+1} \in C_n\} = \P_P\{\phi(Z) = 0\} \geq
1-\alpha$. The same construction extends to the setting in which each $Z_i =
(X_i,Y_i)$, and we seek a prediction set for $Y_{n+1}$ based on $X_{n+1}$ and 
$Z_1,\dots,Z_n$: letting   
\begin{equation}
\label{eq:gen_inversion_x}
C_n(X_{n+1}) = \{ y : \phi(Z_1,\dots,Z_n,(X_{n+1},y)) = 0 \},
\end{equation}
it again follows that $\P_P\{Y_{n+1} \in C_n(X_{n+1})\} = \P_P\{\phi(Z) = 0\}
\geq 1-\alpha$. We describe a few more extensions below and gather these into a
proposition for easy reference.   

\begin{proposition}
\label{prop:duality}
Let $\cP$ be a class of distributions, and consider the composite null
hypothesis $H_0: Z \sim P$ for $P \in \cP$. If $\phi$ is valid level $\alpha$
test for $H_0$,    
\[
\E_P[\phi(Z)] \leq \alpha, \quad \text{for all $P \in \cP$},
\]
then $C_n$ in \eqref{eq:gen_inversion} and $C_n(X_{n+1})$ in
\eqref{eq:gen_inversion_x} are both valid prediction sets with coverage
$1-\alpha$ over the class $\cP$,  
\begin{alignat*}{2}
\P_P\{Z_{n+1} \in C_n\} &\geq 1-\alpha, \quad && \text{for all $P \in \cP$}, \\
\P_P\{Y_{n+1} \in C_n(X_{n+1})\} &\geq 1-\alpha, \quad && \text{for all $P \in
  \cP$}. 
\end{alignat*}
Finally, if $\phi$ is an exact test, satisfying $\E_P[\phi(Z)] = \alpha$ for
each $P \in \cP$, then the inequalities above become equalities: these sets have
exact coverage $1-\alpha$ across $\cP$. 
\end{proposition}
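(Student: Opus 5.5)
The proof reduces to an event identity: the coverage event coincides with the event that the test accepts when applied to the observed sample $Z$. After that, validity for each $P \in \cP$ transfers directly from the test to the set. I will verify the identity for the two constructions separately and then take expectations under an arbitrary fixed $P \in \cP$.

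First, consider $C_n$ in \eqref{eq:gen_inversion}. Since $C_n$ is the set of $z$ with $\phi(Z_1,\dots,Z_n,z)=0$, substituting $z = Z_{n+1}$ gives the pointwise identity of events
\[
\{Z_{n+1} \in C_n\} = \{\phi(Z_1,\dots,Z_n,Z_{n+1}) = 0\} = \{\phi(Z)=0\}.
\]
Because $\phi$ takes values in $\{0,1\}$, we have $\P_P\{\phi(Z)=0\} = 1 - \E_P[\phi(Z)]$. Fix any $P \in \cP$. Validity gives $\E_P[\phi(Z)] \le \alpha$, so $\P_P\{Z_{n+1}\in C_n\} \ge 1-\alpha$. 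Since $P$ was arbitrary, this holds uniformly over $\cP$.

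Second, consider $C_n(X_{n+1})$ in \eqref{eq:gen_inversion_x}. The argument is the same, now substituting $y = Y_{n+1}$ and using $(X_{n+1},Y_{n+1}) = Z_{n+1}$:
\[
\{Y_{n+1} \in C_n(X_{n+1})\} = \{\phi(Z_1,\dots,Z_n,(X_{n+1},Y_{n+1}))=0\} = \{\phi(Z)=0\}.
\]
So the coverage probability again equals $1-\E_P[\phi(Z)] \ge 1-\alpha$ for every $P\in\cP$. For the exactness claim, if $\E_P[\phi(Z)]=\alpha$ for each $P$, the same chain of equalities gives coverage exactly $1-\alpha$.

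No step is a real obstacle here; the only point needing care is measurability. The events above must be well defined, which follows because each is the preimage of $\{0\}$ under the measurable map $\phi$ composed with $Z$. The sets $C_n$ and $C_n(X_{n+1})$ themselves are random sets, but we never need their measurability as set-valued objects, only the measurability of the coverage event. That event is exactly $\{\phi(Z)=0\}$, so this suffices. If $\phi$ were allowed to be randomized, one would instead include the test's external randomization in the probability, but the statement restricts to $\phi \in \{0,1\}$.
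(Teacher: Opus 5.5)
Your proof is correct and is essentially the paper's own argument: the paper also rests on the event identities $\{Z_{n+1} \in C_n\} = \{\phi(Z) = 0\}$ and $\{Y_{n+1} \in C_n(X_{n+1})\} = \{\phi(Z)=0\}$. These give coverage equal to $1 - \E_P[\phi(Z)]$, which is $\geq 1-\alpha$ under validity and $= 1-\alpha$ under exactness. Your remarks on measurability and randomized tests are accurate extra care that the paper does not spell out.
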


Next, we work through some examples.

\subsection{Ordinary least squares regression and t-tests} 

To demonstrate the above duality, we turn to one of the most familiar settings
in statistics: ordinary least squares regression. Let $Z = (Z_1,\dots,Z_{n+1})
\sim P$ have i.i.d.\ entries, where each $Z_i = (X_i,Y_i)$ and 
\begin{equation}
\label{eq:linear_model} 
X_i \sim P_X, \quad Y_i|X_i \sim N(X_i^\T \beta, \sigma^2).
\end{equation}
Following the usual notation in regression, we denote by $\X \in \R^{n \times d}$ 
and $\Y \in \R^n$ the covariate matrix and response vector, respectively, 
collected over the first $n$ samples. Assuming that $n>d$ and $\X^\T \X$ is
almost surely invertible, we denote the ordinary least squares coefficients by      
\[
\hbeta = (\X^\T \X)^{-1} \X^\T \Y.
\]
Furthermore, denote $\hat\sigma^2 = \|\Y - \X\hbeta\|_2^2 / (n-d)$, and define 
\begin{equation}
\label{eq:t_stat}
T(Z) = \frac{Y_{n+1} - X_{n+1}^\T \hbeta}{\hat\sigma \sqrt{1 + X_{n+1}^\T 
(\X^\T \X)^{-1} X_{n+1}}}.
\end{equation}
By a standard calculation, this is a pivotal statistic, and has distribution
$T(Z) \sim t_{n-d}$, where $t_{n-d}$ denotes the t-distribution with $n-d$
degrees of freedom. Define the test 
\[
\phi(Z) = 1\{ |T(Z)| > t_{n-d,1-\alpha/2}\},
\] 
where $t_{n-d,1-\alpha/2}$ denotes the level $1-\alpha/2$ quantile of $t_{n-d}$,
and let $\cP = \{ P_{X,Y}^{n+1} : \text{$P_{X,Y}$ is of the form
  \eqref{eq:linear_model}} \}$. Then by construction $\phi$ is an exact level  
$\alpha$ test over $\cP$, i.e., $\E_P[\phi(Z)] = \alpha$, for all $P \in \cP$.   

Pursuing the inversion scheme \eqref{eq:gen_inversion_x} leads to      
\begin{align}
\nonumber
C_n(X_{n+1}) &= \Big\{ y : \big| T(Z_1,\dots,Z_n,(X_{n+1},y)) \big| >
t_{n-d,1-\alpha/2} \Big\} \\
\label{eq:t_set}
&= \big[ X_{n+1}^\T \hbeta - \hat{s}_{X_{n+1}} \hspace{-2pt} \cdot
t_{n-d,1-\alpha/2}, \, X_{n+1}^\T \hbeta + \hat{s}_{X_{n+1}} \hspace{-2pt} \cdot
t_{n-d,1-\alpha/2} \big],  
\end{align}
where we abbreviate \smash{$\hat{s}_{X_{n+1}}^2 = \hat\sigma^2 (1 + X_{n+1}^\T
  (\X^\T \X)^{-1} X_{n+1})$}. The set in \eqref{eq:t_set} is the textbook
ordinary least squares prediction interval for $Y_{n+1}$ based on
$X_{n+1},Z_1,\dots,Z_n$. Its validity comes directly from the duality between
prediction sets and testing in Proposition \ref{prop:duality}: since $Y_{n+1}
\in C_n(X_{n+1}) \iff \phi(Z) = 0$, it holds that $\P_P\{Y_{n+1} \in
C_n(X_{n+1})\} = 1-\alpha$, for all $P \in \cP$.

More can be said. The statistic $T(Z)$ in \eqref{eq:t_stat} is in fact
\emph{conditionally} distributed as $t_{n-d}$ given $X_1,\dots,X_{n+1}$, which
means that $\phi$ is conditionally valid: $\E_P[\phi(Z) \,|\, X_1,\dots,X_{n+1}]
= \alpha$, for all $P \in \cP$. Thus, by the same logic, the prediction interval
in \eqref{eq:t_set} is also conditionally valid: 
\begin{equation}
\label{eq:t_set_conditional}
\P_P\Big\{ Y_{n+1} \in \big[ x^\T \hbeta - \hat{s}_x \hspace{-1pt} \cdot 
t_{n-d,1-\alpha/2}, \, x^\T \hbeta + \hat{s}_x \hspace{-1pt} \cdot 
t_{n-d,1-\alpha/2} \big] \,\big|\, \X, X_{n+1}= x \Big\} = 1-\alpha, 
\end{equation}
for all $x$, and $P \in \cP$. This turns out to be a special feature of the
parametric model in \eqref{eq:linear_model}, which admits a pivotal statistic
for fixed design (i.e., fixed $\X$ and $X_{n+1}$). In contrast, as we will see later
on (Section \ref{sec:impossibility}), analogous conditional coverage statements
cannot hold for more general methods which are valid over all i.i.d.\ sequences,
except for those producing trivially inefficient prediction sets. 

\subsection{Conformal prediction and permutation tests}    
\label{sec:cp_perm}

Now we turn to a nonparametric setting, which will remain our focus for the 
rest of the paper. Let $Z = (Z_1,\dots,Z_{n+1}) \sim P$, where $P$ is
exchangeable. Given a statistic $T$, define the permutation p-value  
\begin{equation}
\label{eq:perm_pval}
p = \frac{1}{(n+1)!} \sum_{\sigma \in \cS_{n+1}} 1\{ T(Z_\sigma) \geq T(Z) \}, 
\end{equation}
where recall $\cS_{n+1}$ is the set of all permutations of $1,\dots,n+1$, and we
write \smash{$Z_\sigma = (Z_{\sigma(1)}, \dots, Z_{\sigma(n+1)})$}. The level
$\alpha$ permutation test is then defined by  
\[
\phi(Z) = 1\{ p \leq \alpha \}.
\]
Let $\cP$ denote the class of all exchangeable distributions. Then by a standard 
result in permutation testing (e.g., Theorem 17.2.1 in
\citealt{lehmann2022testing}), this is a valid level $\alpha$ test over $\cP$,  
i.e., $\E_P[\phi(Z)] \leq \alpha$, for all $P \in \cP$. (This test can be made
exact by auxiliary randomization.)    

Let us inspect the inversion scheme \eqref{eq:gen_inversion_x}. Writing as
before $Z^y = (Z_1, \dots, Z_n, (X_{n+1}, y))$, this yields
\begin{equation}
\label{eq:perm_inversion}
C_n(X_{n+1}) = \bigg\{ y : \frac{1}{(n+1)!} \sum_{\sigma \in \cS_{n+1}} 1\{
T(Z^y_\sigma) \geq T(Z^y) \} > \alpha \bigg\}.
\end{equation}
Suppose further that we set $T(z) = s(z_{n+1}; z)$ for a score function $s$
satisfying the symmetry condition \eqref{eq:score_symmetric}. Then $T(Z_\sigma)
= s((X_i,Y_i); Z)$ for any permutation $\sigma$ such that $\sigma(n+1) = i$.  
Recalling the notation in \eqref{eq:cp_score} for conformal scores, the set in 
\eqref{eq:perm_inversion} therefore reduces to
\begin{equation}
\label{eq:perm_inversion_symmetric}
C_n(X_{n+1}) = \bigg\{ y : \frac{1}{n+1} \sum_{i=1}^{n+1} 1\{ s^y_i \geq
s^y_{n+1} \} > \alpha \bigg\}.
\end{equation}
This is a well-known equivalent form for the conformal prediction set in 
\eqref{eq:cp_set} (e.g., Chapter 2.2.3 of \citealt{vovk2022algorithmic}, or
Chapter 3.5.1 of \citealt{angelopoulos2025theoretical}). Consequently, the
validity of conformal prediction in Theorem \ref{thm:cp_validity} (and the
generalization in Remark \ref{rem:cp_validity} to arbitrary symmetric scores)
can again be derived from the duality in Proposition \ref{prop:duality}: as
$Y_{n+1} \in C_n(X_{n+1}) \iff \phi(Z) = 0$, it holds that $\P_P\{Y_{n+1} \in
C_n(X_{n+1})\}  \geq 1-\alpha$, for all exchangeable distributions $P$.

Of course, this duality carries over even in the general case in which $T$ does
not exhibit any special symmetry properties. (After all, validity of the
permutation test does not require any assumptions about the statistic $T$.)
Setting $T(z) = s(z_{n+1}; z)$, without any symmetry condition on $s$, the set
in \eqref{eq:perm_inversion} becomes 
\begin{equation}
\label{eq:perm_inversion_general}
C_n(X_{n+1}) = \bigg\{ y : \frac{1}{(n+1)!} \sum_{\sigma \in \cS_{n+1}} 1\big\{
s(Z^y_{\sigma(n+1)}; Z^y_\sigma) \geq s((X_{n+1},y); Z) \big\} > \alpha \bigg\}. 
\end{equation}
Proposition \ref{prop:duality} again implies this set has coverage:
$\P_P\{Y_{n+1} \in C_n(X_{n+1})\} \geq 1-\alpha$, for all exchangeable
$P$. Already, this says something interesting about conformal prediction:
symmetry of the score function is \emph{not} necessary for validity; it is
instead a computational device used to simplify the combinatorial nature of the
permutation p-value. In other words, exchangeability of the data (not
exchangeability of the scores) is all that is needed.

\subsection{What is new (and what is not)?}

The duality between hypothesis tests and prediction sets described at the start
of this section is not new. Although it is perhaps underappreciated, this idea
dates back at least 50 years to \citet{cox1975prediction}, or 85 years to
\citet{wilks1941determination}, depending on the interpretation. The connections
between conformal prediction and hypothesis testing are also not new. Vovk and
coauthors have traditionally defined conformal prediction sets by inverting
p-values; see, e.g., Chapter 2.2.3 of \citet{vovk2022algorithmic}, and also
\citet{vovk1999machine}, which marks the inception of the
literature. Furthermore, these authors frequently motivate conformal prediction
by drawing connections to the work of Gosset, Fisher, Neyman, and others in the 
early to mid 1900s; see, e.g., Chapter 13.3 of \citet{vovk2022algorithmic}, and
also \citet{vovk2008tutorial} for related discussion.

The statistics literature on conformal prediction has also made clear reference
to hypothesis testing, with \citet{lei2013distribution, lei2014distribution,
  lei2018distribution} being early instances of this. Indeed, in both
\citet{lei2014distribution, lei2018distribution}, it is explicitly
stated that the conformal prediction set in \eqref{eq:cp_set} can be viewed as 
inverting a test for the null hypothesis $H_0 : Y_{n+1} = y$. Connections
between conformal prediction and permutation/randomization testing are also
drawn in \citet{dobriban2023joint, hoff2023bayes, zhang2023what,
  barber2026unifying, nair2026randomization}, as well as Chapter 3.5.1 of
\citet{angelopoulos2025theoretical}, among others.   

What is new in the current paper?  To build toward our results in the coming
sections, we start by clarifying the formal connection between conformal
prediction and permutation testing (Section \ref{sec:cp_perm}), emphasizing that
conformal prediction inverts a permutation test for the null hypothesis of
\emph{exchangeability of the full data $Z = (Z_1,\dots,Z_{n+1})$}. To state the
connection once again: the conformal set \eqref{eq:cp_set} can be equivalently
written as 
\begin{multline*}
C_n(X_{n+1}) = \Big\{ y : \text{the level $\alpha$ permutation test with
  statistic $T(z) = s(z_{n+1}; z)$ applied to} \\ 
\text{data $(Z_1,\dots,Z_n,(X_{n+1},y))$ does not reject the null of
  exchangeability} \Big\}.   
\end{multline*}
With $P$ denoting the distribution of $Z$, and $\cP$ the class of exchangeable
distributions, the underlying null hypothesis here is $H_0: P \sim \cP$, which
maps cleanly onto traditional formalization in statistical inference.  

Our main contribution is to follow this connection to its logical conclusion,
and ask what hypothesis testing theory can teach us about conformal
prediction. This reproduces existing universality and impossibility results
(Sections \ref{sec:universality} and \ref{sec:impossibility}), and produces a
new optimality result (Section \ref{sec:optimality}). Moreover, beyond these 
particular results, our paper suggests a shift in perspective. Most of the 
conformal literature referenced above treats facts about hypothesis testing as
motivating principles, and treats conformal theory as separate, but
analogous. We present a different view: these are not just analogies, and core
facts about hypothesis testing \emph{directly} translate to conformal
prediction. Hence, the former provides a language which \emph{precisely} helps
us understand the latter.     

\section{Universality}
\label{sec:universality}

Let $\phi$ be a valid level $\alpha$ test for a composite null hypothesis $H_0 :
Z \sim P$, $P \in \cP$, i.e., let $\phi$ satisfy
\begin{equation}
\label{eq:validity}
\E_P[\phi(Z)] \leq \alpha, \quad \text{for all $P \in \cP$}.
\end{equation}
Let $U = U(Z)$ be a sufficient statistic for $\cP$ (which means the distribution
of $Z|U$ does not depend on $P$). For $P \in \cP$, let $P_U$ denote the induced 
distribution on $U$, and similarly, let $\cP_U = \{ P_U : P \in \cP \}$ denote
the induced class. Then any test which satisfies\footnote{For a class $\cP$, a
  statement is said to hold $\cP$-almost everywhere if it holds except on a set
  $S$ with \mbox{$P(S) = 0$} for all \mbox{$P \in \cP$}.}
\begin{equation}
\label{eq:neyman_structure}
\E[\phi(Z) | U=u] \leq \alpha, \quad \text{for $\cP_U$-almost every $u$},    
\end{equation}
clearly satisfies \eqref{eq:validity}. Tests satisfying
\eqref{eq:neyman_structure} are said to have \emph{Neyman structure} with
respect to $U$ (see, e.g., Chapter 4.3 of \citealt{lehmann2022testing}). A
seminal result, which can be traced back to the work of
\citet{neyman1937outline, ghosh1948extension, hoel1948neymans,
  lehmann1950completeness, lehmann1955completeness, basu1955statistics}: 
if the class $\cP$ is rich enough, then Neyman structure
\eqref{eq:neyman_structure} is not only sufficient for validity
\eqref{eq:validity}, but also necessary.     

\begin{theorem}[Adaptation of Theorem 4.3.2 in \citealp{lehmann2022testing}] 
\label{thm:neyman_structure}
Let $U$ be a sufficient statistic for $\cP$. Assume that $U$ is \emph{boundedly
  complete} for $\cP$, which means that for any bounded function $f$,   
\begin{equation}
\label{eq:boundedly_complete_1s}
\E_{P_U}[f(U)] \leq 0 \;\, \text{for all $P_U \in \cP_U$} \quad \implies \quad  
f(u) \leq 0 \;\, \text{for $\cP_U$-almost every $u$}.  
\end{equation}
Then a necessary and sufficient condition for a test to be level $\alpha$ over
$\cP$ \eqref{eq:validity} is that it is of Neyman structure 
\eqref{eq:neyman_structure}. That is, marginal validity \eqref{eq:validity} and 
$U$-conditional validity \eqref{eq:neyman_structure} over $\cP$ are equivalent.    
\end{theorem}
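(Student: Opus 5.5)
Sufficiency is the trivial direction, already noted before the statement. If $\phi$ has Neyman structure \eqref{eq:neyman_structure}, then for any $P \in \cP$ the tower property gives $\E_P[\phi(Z)] = \E_{P_U}\big[\E[\phi(Z) \,|\, U]\big] \leq \alpha$. This uses that the conditional expectation $\E[\phi(Z)|U=u]$ is the same function of $u$ for every $P \in \cP$, by sufficiency. Moreover, the inequality holds $P_U$-almost surely for each $P$, which is exactly what "$\cP_U$-almost every $u$" means.

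For necessity, I will start from a level $\alpha$ test $\phi$ satisfying \eqref{eq:validity}. Sufficiency of $U$ lets me define a single version of the conditional expectation,
\[
g(u) = \E[\phi(Z) \,|\, U = u],
\]
not depending on $P$. Since $\phi$ takes values in $[0,1]$ (in $\{0,1\}$ in the paper's setup, or $[0,1]$ for randomized tests), I can choose $g$ with values in $[0,1]$. I then set $f(u) = g(u) - \alpha$, which is bounded.

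By the tower property, for every $P \in \cP$,
\[
\E_{P_U}[f(U)] = \E_P[\phi(Z)] - \alpha \leq 0 .
\]
Bounded completeness in its one-sided form \eqref{eq:boundedly_complete_1s} then gives $f(u) \leq 0$ for $\cP_U$-almost every $u$. That is precisely $\E[\phi(Z)|U=u] \leq \alpha$ almost everywhere, which is Neyman structure \eqref{eq:neyman_structure}. This proves the equivalence.

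The only delicate point is the measure-theoretic one: I need a version of the conditional expectation that is common to all $P \in \cP$. This is exactly what sufficiency of $U$ provides, and I will invoke it directly rather than re-derive it. I also note that the one-sided completeness condition as stated here makes the argument immediate. Lehmann's original uses two-sided bounded completeness ($\E f = 0 \Rightarrow f = 0$) together with an exact-size condition $\E_P[\phi]=\alpha$; the formulation in \eqref{eq:boundedly_complete_1s} is what allows the inequality version to go through in a single step.
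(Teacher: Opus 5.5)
Your proof is correct and matches the paper's argument in Appendix~\ref{app:neyman_structure} essentially verbatim: you define $f(u) = \E[\phi(Z) \mid U=u] - \alpha$, which is bounded and the same for all of $\cP$ by sufficiency, observe that $\E_{P_U}[f(U)] = \E_P[\phi(Z)] - \alpha \leq 0$, and apply one-sided bounded completeness. Your tower-property argument for the converse direction is likewise the observation the paper makes just before stating the theorem.
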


To be clear, our treatment here is a seemingly minor yet nontrivial adaption of
the results in Chapter 4.3 of \citet{lehmann2022testing}. They focus on
exact level $\alpha$ tests, whereas we focus on valid (but possibly
conservative) level $\alpha$ tests. This requires modifying the definitions of
Neyman structure and bounded completeness. Notably, our definition
\eqref{eq:boundedly_complete_1s} is stronger than the traditional definition of
bounded completeness, but is suitable for our purposes in what follows. A simple
proof of Theorem \ref{thm:neyman_structure} is given in Appendix
\ref{app:neyman_structure}. A discussion of the differences in notions of
completeness is given in Appendix \ref{app:boundedly_complete}.         

To study the application of this theory to conformal prediction, we inspect the
case in which $\cP$ is the class of exchangeable distributions. First, note that 
a sufficient statistic for this class is the multiset or ``bag'' of elements of 
$Z = (Z_1,\dots,Z_{n+1})$, denoted \smash{$U = \Zbag$}.\footnote{When
  \smash{$Z \in \R^{n+1}$}, an equivalent formulation for the sufficient
  statistic here is to define \smash{$U = (Z_{(1)}, \dots, Z_{(n+1)})$},
  the vector of order statistics. This is more common in the classical
  literature on statistical inference. We use the multiset \smash{$U = \Zbag$},
  as in the conformal literature, because this extends seamlessly to arbitrary 
  unordered spaces.}  
For $Z \sim P$, with $P$ exchangeable, and \smash{$u = \zbag$},  
\begin{equation}
\label{eq:perm_dist}
Z | U=u \sim \frac{1}{(n+1)!} \sum_{\sigma \in \cS_{n+1}} \delta_{z_\sigma}.  
\end{equation}
The right-hand side is the uniform distribution over all permutations $z_\sigma$
of the given $n+1$ elements, and does not depend on $P$, which verifies
sufficiency. Next, we claim \smash{$U = \Zbag$} is boundedly complete  
\eqref{eq:boundedly_complete_1s} for the class $\cP$ of exchangeable
distributions. This is straightforward to check: let $f$ be such that
\smash{$\E_{P_U}[f(U)] \leq 0$} for all exchangeable $P$, fix \smash{$u =
  \zbag$}, and inspect \smash{$P = \frac{1}{(n+1)!} \sum_{\sigma \in \cS_{n+1}} 
  \delta_{z_\sigma}$}. This choice is exchangeable, and the induced
distribution on $U$ is $P_U = \delta_u$. By construction \smash{$\E_{P_U}[f(U)]
  = f(u)$}, and so $f(u) \leq 0$.    

Therefore, Theorem \ref{thm:neyman_structure} implies that any test $\phi$ which
is level $\alpha$ over the class of exchangeable distributions must have Neyman 
structure with respect to \smash{$U = \Zbag$}. Recalling \eqref{eq:perm_dist}, 
we have for \smash{$u = \zbag$},  
\[
\E[\phi(Z) | U=u] = \frac{1}{(n+1)!} \sum_{\sigma \in \cS_{n+1}} \phi(z_\sigma),   
\]
and so Neyman structure \eqref{eq:neyman_structure} in the current setting 
translates to  
\begin{equation}
\label{eq:neyman_structure_bag}
\frac{1}{(n+1)!} \sum_{\sigma \in \cS_{n+1}} \phi(z_\sigma) \leq \alpha, \quad
\text{for all $z$}.
\end{equation}
We can verify that the above condition is equivalent to $\phi$ having
permutation structure, i.e.,    
\begin{equation}
\label{eq:neyman_structure_perm}
\text{$\phi$ satisfies \eqref{eq:neyman_structure_bag}} \quad \iff \quad
\text{$\phi(Z) = 1\{p \leq \alpha\}$ for a p-value $p$ of the form 
  \eqref{eq:perm_pval}}.  
\end{equation}
This follows from a standard argument in the literature on permutation testing, 
given in Appendix \ref{app:neyman_structure_perm} for completeness. As valid
prediction sets are the result of inverting valid tests, and conformal
prediction sets are the result of inverting permutation tests, we have thus 
reproduced a well-known conformal universality result (cf.\ Proposition 2.9 of
\citealt{vovk2022algorithmic}, or Theorem 9.7 of 
\citealt{angelopoulos2025theoretical}).   

\begin{theorem}[Universality]
\label{thm:universality}
Let $Z = (Z_1,\dots,Z_{n+1}) \sim P$, and consider a mapping which takes 
$X_{n+1}$ and $Z_1,\dots,Z_n$ to a prediction set $C_n(X_{n+1})$. If
$\P_P\{Y_{n+1} \in C_n(X_{n+1})\} \geq 1-\alpha$ for all exchangeable $P$, then 
this prediction set must be of conformal type, i.e., of the form
\eqref{eq:perm_inversion_general} for a score function $s$.     

Furthermore, if the set $C_n(X_{n+1})$ is invariant to the ordering of
$Z_1,\dots,Z_n$, then it is of the form \eqref{eq:perm_inversion_symmetric} for 
scores \eqref{eq:cp_score} and a symmetric score function $s$.  
\end{theorem}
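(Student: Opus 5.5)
Given a prediction set $C_n(X_{n+1})$ with coverage over all exchangeable $P$, I will define a test by reversing the inversion \eqref{eq:gen_inversion_x}:
\[
\phi(z) = 1\{ y_{n+1} \notin C_n(x_{n+1}; z_1,\dots,z_n) \}, \qquad z = (z_1,\dots,z_{n+1}),\; z_{n+1} = (x_{n+1},y_{n+1}).
\]
Coverage then says exactly that $\E_P[\phi(Z)] \le \alpha$ for every exchangeable $P$, so $\phi$ is a valid level $\alpha$ test over $\cP$. By construction, inverting $\phi$ via \eqref{eq:gen_inversion_x} gives back $C_n(X_{n+1})$ exactly. Since $U = \Zbag$ is sufficient and boundedly complete in the sense of \eqref{eq:boundedly_complete_1s}, as verified above, Theorem \ref{thm:neyman_structure} says $\phi$ has Neyman structure. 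In the present setting this is \eqref{eq:neyman_structure_bag}: the average of $\phi(z_\sigma)$ over $\sigma$ is at most $\alpha$. The "$\cP_U$-almost every" qualifier becomes "for all $z$", because the point mass permutation distribution is itself exchangeable.

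Next I will invoke \eqref{eq:neyman_structure_perm}. It gives $\phi(z) = 1\{p(z) \le \alpha\}$ with $p$ the permutation p-value \eqref{eq:perm_pval} for some statistic $T$. Concretely, I expect one can take $T(z) = \phi(z)$, possibly plus a small tie-breaking term. Then $p(z)$ is the fraction of permutations with $\phi(z_\sigma) \ge \phi(z)$. If $\phi(z) = 1$, this fraction is at most $\alpha$ by \eqref{eq:neyman_structure_bag}. If $\phi(z) = 0$, it equals $1 > \alpha$. So no tie-breaking is needed in this direction, and the argument is self-contained for $\alpha < 1$; the case $\alpha \ge 1$ is trivial. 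I then set $s(z_{n+1}; z) := T(z) = \phi(z)$. The statistic $T$ is a function of the whole vector $z$, so it can always be written in the form $s(z_{n+1}; z)$. Plugging this into \eqref{eq:perm_inversion} gives exactly \eqref{eq:perm_inversion_general}, and its inversion recovers $C_n$. This proves the first claim.

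For the second claim, suppose $C_n$ is invariant to the ordering of $z_1,\dots,z_n$. Then $\phi(z)$ depends only on $z_{n+1}$ and on the bag $\lbag z_1,\dots,z_n \rbag$, hence only on $z_{n+1}$ and the full bag $\zbag$. Define
\[
s((x,y); z) = 1\{ y \notin C_n(x; w) \},
\]
where $w$ is any ordering of the multiset $\zbag$ with one copy of $(x,y)$ removed, and $s$ is set to $0$ if $(x,y)$ is not an element of $\zbag$. The order-invariance of $C_n$ makes this well defined. Because $s$ depends on $z$ only through $\zbag$, it is symmetric in the sense of \eqref{eq:score_symmetric}. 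Moreover $T(z) = s(z_{n+1}; z) = \phi(z)$, so the reduction from \eqref{eq:perm_inversion} to \eqref{eq:perm_inversion_symmetric} applies, with scores $s^y_i$ as in \eqref{eq:cp_score}.

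The main obstacle is the bookkeeping in this last step. When $\zbag$ has repeated elements, one must check that removing "one copy" is well defined. It is, because which copy is removed does not change the resulting multiset. One must also check that $s^y_i$ for $i \le n$ is exactly $\phi$ evaluated at the configuration in which $Z_i$ is placed last. I would also confirm measurability of $\phi$, which is tacitly needed for coverage to make sense, and would otherwise take it as part of the hypothesis.
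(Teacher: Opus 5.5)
Your proposal is correct and follows the paper's proof essentially step for step: you define $\phi(Z)=1\{Y_{n+1}\notin C_n(X_{n+1})\}$, obtain Neyman structure from the one-sided bounded completeness of the bag, take $T=\phi$ exactly as in the paper's appendix proof of \eqref{eq:neyman_structure_perm}, and set $s=T$. Your ``remove one copy of $(x,y)$ from $\zbag$'' construction for the symmetric case just makes explicit what the paper's footnote asserts, and one caveat is shared with the paper's appendix, which calls this case ``vacuous'': for $\alpha=1$ the conclusion is not trivial but false (e.g.\ $C_n\equiv\cY$ is valid, yet no permutation p-value exceeds $1$, so every set of the form \eqref{eq:perm_inversion_general} is empty), so the argument should be stated for $\alpha<1$.
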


\begin{proof}
Write $C_n(X_{n+1}) = \{ y : \phi(Z_1,\dots,Z_n,(X_{n+1},y)) = 0 \}$ for a test
$\phi$, which note we can do without a loss of generality since we may always
take $\phi(Z) = 1\{Y_{n+1} \notin C_n(X_{n+1})\}$. The assumed 
coverage property now implies $\E_P[\phi(Z)] \leq \alpha$ for all exchangeable
$P$. By the arguments given above, this implies $\phi$ must have Neyman
structure \eqref{eq:neyman_structure_bag} with respect to \smash{$U = \Zbag$},
and hence $\phi(Z) = 1\{p \leq \alpha\}$ for a permutation p-value $p$ as in
\eqref{eq:perm_pval} and statistic $T$. Defining the score function by
$s(z_{n+1}; z) = T(z)$, we have shown $C_n(X_{n+1})$ is of the form
\eqref{eq:perm_inversion_general}, which proves the first claim in the theorem.       

For the second claim, if $C_n(X_{n+1})$ is invariant to the ordering of
$Z_1,\dots,Z_n$, then the same must be true of $T$ and $s$. Thus $s$ meets the
symmetry condition \eqref{eq:score_symmetric},\footnote{Note that the following 
  statements are all equivalent: a score $s$ satisfies
  \eqref{eq:score_symmetric}; 
  it may be written as \mbox{$s(z_{n+1}; \{z_1,\dots,z_{n+1}\})$}; 
  it may be written as \mbox{$s(z_{n+1}; \{z_1,\dots,z_n\})$}; 
  and hence \mbox{$s(z_{n+1}; z)$} is invariant to the order of
  $z_1,\dots,z_n$.} 
and by the arguments given in Section \ref{sec:cp_perm}, the set
\eqref{eq:perm_inversion_general} reduces to
\eqref{eq:perm_inversion_symmetric}. This completes the proof.     
\end{proof}

\section{Efficiency}
\label{sec:efficiency}

In what follows, we assume that $Z = (Z_1,\dots,Z_{n+1}) \sim P$ has i.i.d.\
entries, i.e., we assume $P$ has the product form \smash{$P = P_{X,Y}^{n+1}$}
for a joint distribution on the covariates and response
\smash{$P_{X,Y}$}. Letting \smash{$P_{X,Y} = P_X \times P_{Y|X}$}, note that we
can equivalently express \smash{$Z_i = (X_i,Y_i) \sim P_{X,Y}$} as \smash{$X_i
  \sim P_X$} and \smash{$Y_i|X_i \sim P_{Y|X=X_i}$}.        

Let $\cY$ denote the response space, and let $\mu$ be a measure on $\cY$. For
example, for a continuous response we could take $\cY = \R$ and take $\mu$ to be 
Lebesgue measure; for a categorical response $\cY$ would be discrete and we
could take $\mu$ to be counting measure. Given a prediction set mapping, which 
takes $X_{n+1},Z_1,\dots,Z_n$ to a set $C_n(X_{n+1})$, we define its
\emph{efficiency} by     
\[
\E_P[\mu(C_n(X_{n+1}))],
\]
the average measure of $C_n(X_{n+1})$, where the average is over all sources of
randomness: $X_{n+1},Z_1,\dots,Z_n$. Using the representation
\eqref{eq:gen_inversion_x} for $C_n(X_{n+1})$ as the inversion of a test $\phi$, 
observe that we can write
\begin{align}
\nonumber
\E_P[\mu(C_n(X_{n+1}))] 
&= \int \E_{P_{X,Y}^n}\big[ 1\{y \in C_n(x)\} \big] \, \d{P}_X(x) \, \d\mu(y) \\  
\label{eq:mu_integral}
&= \int \E_{P_{X,Y}^n}\big[ 1 - \phi(Z_1,\dots,Z_n,(x,y)) \big] \, \d{P}_X(x) \, 
  \d\mu(y).
\end{align}
Assuming $\mu(\cY) < \infty$, we can normalize $\mu$ to form a probability
measure, and define    
\begin{equation}
\label{eq:alternative_q}
Q = P_{X,Y}^n \times \Big( P_X \times \frac{\mu}{\mu(\cY)} \Big). 
\end{equation}
This allows us to express efficiency \eqref{eq:mu_integral} succinctly as 
\begin{equation}
\label{eq:mu_type_II}
\E_P[\mu(C_n(X_{n+1}))] = \mu(\cY) \cdot \E_Q[1-\phi(Z)].
\end{equation}
In other words, while we have seen that coverage of prediction sets is a
statement about type I error under the null hypothesis $P$ (Proposition 
\ref{prop:duality}), the formulation in \eqref{eq:mu_type_II} reveals that
efficiency is a statement about \emph{type II} error: the right-hand side is
precisely a constant (the total mass $\mu(\cY)$) times the type II error of
$\phi$ under the alternative hypothesis $Q$ in \eqref{eq:alternative_q}.    

In the next two sections, we will leverage the connection in
\eqref{eq:mu_type_II} to investigate different questions concerning
efficiency. This is a powerful reframing, as it allows us to translate such 
questions to ones about type I versus type II errors in hypothesis testing.    

\section{Impossibility}
\label{sec:impossibility}

In this section, we use the representation for efficiency developed in the
previous section to derive results on the impossibility of conditional coverage,
establishing that conformal prediction cannot offer nontrivial guarantees which 
are conditional---rather than marginal---on the covariate $X_{n+1}$. In
particular, we define the class \smash{$\cP_\iid = \{ P : P = P_{X,Y}^{n+1} \;\,
  \text{for some $P_{X,Y} = P_X \times P_{Y|X}$} \}$}, and we now consider
methods which satisfy the conditional property 
\begin{equation}
\label{eq:cond_validity1}
\P_P\{Y_{n+1} \in C_n(x) \,|\, X_{n+1}=x \} \geq 1-\alpha, \quad \text{for all
  $x \in \supp(P_X)$, and all $P \in \cP_\iid$},
\end{equation}
where $\supp(P_X)$ denotes the support of $P_X$.\footnote{For a distribution
  $P$, its support $\supp(P)$ is the smallest closed set $S$ such that 
  \mbox{$P(S) = 1$}. We work with the conditional coverage property in
  \eqref{eq:cond_validity1}, which holds at each \mbox{$x \in \supp(P_X)$},  
  because it cleanly transforms to \eqref{eq:cond_validity2}. We could
  alternatively choose to define \eqref{eq:cond_validity1} as holding for 
  $P_X$-almost every $x$, which would be more in line with the typical phrasing
  of conditional coverage properties in the conformal literature; however, this 
  requires slightly more sophisticated arguments which we prefer to avoid for
  simplicity.}  
We can recast this equivalently as follows. Let $\cX$ denote the covariate
space, and $\delta_x$ denote the point mass at $x \in \cX$ (it is the
distribution which places all it mass at $x$). Define the class  
\[
\cP_\cond = \Big\{ P : P = P_{X,Y}^n \times (\delta_x \times P_{Y|X=x}) \;\,
\text{for some $P_{X,Y} = P_X \times P_{Y|X}$ and $x \in \supp(P_X)$} \Big\}, 
\]
Then the conditional coverage property in \eqref{eq:cond_validity1} is
equivalent to   
\begin{equation}
\label{eq:cond_validity2}
\P_P\{Y_{n+1} \in C_n(X_{n+1})\} \geq 1-\alpha, \quad \text{for all $P \in
  \cP_\cond$},  
\end{equation}
which is marginal coverage over \smash{$\cP_\cond$}. 

For a given distribution \smash{$P = P_{X,Y}^{n+1}$}, and a given non-atom point
$x_0 \in \supp(P_X)$, we will study the limits of pointwise efficiency
\smash{$\E_P[\mu(C_n(x_0))]$} among all methods with conditional coverage over
\smash{$\cP_\iid$}, as in \eqref{eq:cond_validity1}, or equivalently, marginal
coverage over \smash{$\cP_\cond$}, as in \eqref{eq:cond_validity2}. As
demonstrated in Section \ref{sec:efficiency}, this can be posed as a question
about power. If $\mu(\cY) < \infty$, then analogous to \eqref{eq:alternative_q},
we can define  
\begin{equation}
\label{eq:alternative_q0}
Q_0 = P_{X,Y}^n \times \Big( \delta_{x_0} \times \frac{\mu}{\mu(\cY)} \Big),   
\end{equation}
and by arguments analogous to those leading to \eqref{eq:mu_type_II}, it holds
that 
\begin{equation}
\label{eq:mu_type_II_x0}
\E_P[\mu(C_n(x_0))] = \mu(\cY) \cdot \E_{Q_0}[1-\phi(Z)].
\end{equation}
Note that the right-hand side is a constant times the type II error of $\phi$
against $Q_0$, or one minus its power. Thus, our question becomes: what is
largest possible power among all tests valid over \smash{$\cP_\cond$}?    

We break our analysis below into two cases: the finite measure case $\mu(\cY) < 
\infty$ (which is needed in order for $Q_0$ to be well-defined), and the
infinite measure case (which requires a truncation argument). In either case, we
will rely on the following core result. This can be found in Theorem 5 of
\citet{kraft1955conditions}, however, Kraft attributes the result to Le
Cam. Theorem 5 in \citet{kraft1955conditions} actually establishes a stronger
statement than that transcribed below, under conditions on the distributions
$P,Q$. The weaker version we state in Theorem \ref{thm:kraft_le_cam} follows
directly from their proof without any such conditions. 

\begin{theorem}[Weak version of Theorem 5 in \citealp{kraft1955conditions}] 
\label{thm:kraft_le_cam}
Let $\cP$ and $\cQ$ be classes of distributions, and let $\phi$ be a valid level
$\alpha$ test over $\cP$, i.e., \smash{$\sup_{P \in \cP} \E_P[\phi(Z)] \leq
  \alpha$}. Then the power of $\phi$ over $\cQ$ is upper bounded by  
\[
\sup_{Q \in \cQ} \, \E_Q[\phi(Z)] \leq \alpha + \TV(\conv(\cP), \conv(\cQ)),  
\]
where $\conv(\cP), \conv(\cQ)$ denote the convex hulls of $\cP$, $\cQ$,
respectively, and $\TV(\conv(\cP), \conv(\cQ))$ denotes the total variation
distance between the hulls, i.e., the infimum of $\TV(P,Q) = \sup_S |P(S) -
Q(S)|$ over all $P \in \conv(\cP)$ and $Q \in \conv(\cQ)$.   
\end{theorem}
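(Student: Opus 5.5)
The plan is to reduce the bound to a single pair of mixtures, one from each hull, using two elementary facts: expectations of a bounded test are affine in the distribution, and total variation controls differences of expectations of $[0,1]$-valued functions. The argument does not need any structure on $\cP$ or $\cQ$ beyond what is in the statement.

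First, I would extend validity from $\cP$ to $\conv(\cP)$. Take any $\bar P = \sum_{j=1}^k \lambda_j P_j$ with $P_j \in \cP$, $\lambda_j \geq 0$ and $\sum_j \lambda_j = 1$. By linearity of the integral,
\[
\E_{\bar P}[\phi(Z)] = \sum_j \lambda_j \E_{P_j}[\phi(Z)] \leq \alpha .
\]
In the same way, for $\bar Q = \sum_j \eta_j Q_j \in \conv(\cQ)$,
\[
\E_{\bar Q}[\phi(Z)] = \sum_j \eta_j \E_{Q_j}[\phi(Z)] \leq \sup_{Q \in \cQ} \E_Q[\phi(Z)] .
\]
The reverse inequality is trivial since $\cQ \subseteq \conv(\cQ)$. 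Hence $\sup_{Q\in\cQ}\E_Q[\phi] = \sup_{\bar Q\in\conv(\cQ)}\E_{\bar Q}[\phi]$.

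Second, I would use the standard inequality $|\E_{P'}[f] - \E_{Q'}[f]| \leq \TV(P',Q')$ for any measurable $f$ with values in $[0,1]$. This follows from the layer-cake formula $\E[f] = \int_0^1 \P\{f>t\}\,dt$, since each level-set probability differs by at most $\sup_S|P'(S)-Q'(S)|$. For a nonrandomized $\phi \in \{0,1\}$ it is immediate, taking $S=\{\phi=1\}$. Fix arbitrary $\bar P\in\conv(\cP)$ and $\bar Q\in\conv(\cQ)$. Combining the two steps gives
\[
\E_{\bar Q}[\phi(Z)] \leq \E_{\bar P}[\phi(Z)] + \TV(\bar P,\bar Q) \leq \alpha + \TV(\bar P,\bar Q).
\]

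Third, I would optimize over the mixtures. Fix $\bar Q$ and take the infimum over $\bar P$; then take the supremum over $\bar Q$. This yields
\[
\sup_{Q\in\cQ}\E_Q[\phi] \leq \alpha + \sup_{\bar Q}\,\inf_{\bar P}\TV(\bar P,\bar Q).
\]
This is weaker than the claimed bound, which has $\inf_{\bar P,\bar Q}$ rather than $\sup_{\bar Q}\inf_{\bar P}$. Closing this gap is the main obstacle: a sup over $\bar Q$ cannot in general be replaced by an inf. With only the first two steps, the chain proves the bound for the pair $(\bar P, \bar Q)$ that is used, namely $\E_{\bar Q}[\phi] \leq \alpha + \TV(\bar P, \bar Q)$ for every such pair.

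To reach the stated form, I would follow Kraft's proof. There the quantity is understood through the minimax identity
\[
\inf_{\phi\text{ level }\alpha}\,\sup_{\bar Q}\E_{\bar Q}[1-\phi] \;\geq\; 1-\alpha-\TV(\conv\cP,\conv\cQ),
\]
which is obtained by a separating-hyperplane (minimax) argument on the convex sets of distributions. I would attempt that step as follows. The set of tests $\phi$ with values in $[0,1]$ is convex and weak-$*$ compact, and $(\phi,\bar Q)\mapsto \E_{\bar Q}[\phi]$ is bilinear, so a minimax theorem allows swapping the sup and inf. The swapped quantity then reduces to the Neyman--Pearson-type bound from the second step for the closest pair $(\bar P,\bar Q)$. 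Verifying the compactness and continuity conditions needed for the minimax swap is the delicate part, and I would present it following Kraft's argument while noting that the paper's conditions-free weak version comes directly from that proof.
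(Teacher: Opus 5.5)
\textbf{There is a genuine gap, and it sits exactly where you flag it.} No argument can turn $\sup_{\bar Q}\inf_{\bar P}$ into $\inf_{\bar P,\bar Q}$ here. With $\sup_{Q\in\cQ}$ on the left, the inequality is false once $\cQ$ has more than one element. Take $\cP=\{\delta_0\}$, $\cQ=\{\delta_0,\delta_1\}$ and $\phi(z)=1\{z=1\}$. Then $\E_{\delta_0}[\phi]=0\le\alpha$, and $\TV(\conv(\cP),\conv(\cQ))=0$ because $\delta_0$ lies in both hulls. Yet $\sup_{Q\in\cQ}\E_Q[\phi(Z)]=1>\alpha$ for any $\alpha<1$.

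The minimax route would not rescue this even with every compactness condition verified. The inequality you display, $\sup_{\bar Q}\E_{\bar Q}[1-\phi]\geq 1-\alpha-\TV(\conv(\cP),\conv(\cQ))$, is equivalent to $\inf_{\bar Q}\E_{\bar Q}[\phi]\leq\alpha+\TV(\conv(\cP),\conv(\cQ))$. That bounds the \emph{worst-case} power, not $\sup_Q\E_Q[\phi]$. Moreover, the compactness and minimax machinery behind Kraft's theorem serves the converse direction: existence of a test whose power gap attains the hull distance. The weak version does not claim that direction, and it is the part that needs Kraft's conditions.

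What holds with no conditions on $\cP,\cQ$, and is the condition-free part of the Kraft/Le Cam argument, is the bound with $\inf_{Q\in\cQ}$ on the left. Your Steps 1--2 already prove it without any minimax argument. For any $\bar P\in\conv(\cP)$ and $\bar Q\in\conv(\cQ)$,
\[
\inf_{Q\in\cQ}\E_Q[\phi]\leq\E_{\bar Q}[\phi]\leq\E_{\bar P}[\phi]+\TV(\bar P,\bar Q)\leq\alpha+\TV(\bar P,\bar Q).
\]
The first inequality holds because an expectation under a finite mixture is a weighted average of the component expectations. Now take the infimum over both $\bar P$ and $\bar Q$.

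In the paper's only use of the result, Theorem~\ref{thm:hardness_finite}, the alternative class is the singleton $\{Q_0\}$. There $\sup$ and $\inf$ coincide, so this chain gives exactly the stated inequality. The right move is to restrict to singleton $\cQ$, or to restate the result with $\inf_{Q\in\cQ}$, rather than attempt a sup-to-inf swap that cannot succeed.
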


\subsection{Finite measure case}

In the finite measure case $\mu(\cY) < \infty$, the distribution $Q_0$ in
\eqref{eq:alternative_q0} is well-defined, and the arguments above already
bring us close to a result about the limits of efficiency of prediction set
methods which are conditionally valid, as in \eqref{eq:cond_validity1}. The key
realization is that    
\begin{equation}
\label{eq:tv_zero}
\TV(\conv(\cP_\cond), \{Q_0\}) = 0.
\end{equation}
This claim will be verified shortly. As a consequence, by Theorem
\ref{thm:kraft_le_cam}, any test which is level $\alpha$ over
\smash{$\cP_\cond$} must be \emph{powerless}, i.e., have power at most $\alpha$,
against the alternative $Q_0$ in \eqref{eq:alternative_q0}. Rephrasing this in
terms of prediction sets, using \eqref{eq:mu_type_II_x0}, we arrive at the
following hardness result.

\begin{theorem}[Conditional hardness, finite measure case]
\label{thm:hardness_finite}
Assume $\mu(\cY) < \infty$. Consider a mapping which takes $X_{n+1}$ and
$Z_1,\dots,Z_n$ to a prediction set $C_n(X_{n+1})$. If this method satisfies
conditional validity over \smash{$\cP_\iid$}, as in \eqref{eq:cond_validity1},
then for any \smash{$P_{X,Y} = P_X \times P_{Y|X}$} and any non-atom point 
$x_0 \in \supp(P_X)$, it holds that 
\[
\E_P[\mu(C_n(x_0))] \geq \mu(\cY) \cdot (1-\alpha). 
\]
In other words, the pointwise efficiency $\E_P[\mu(C_n(x_0))]$ of this method is
no better than the trivial method which draws $B \sim \mathrm{Bern}(1-\alpha)$ 
independently of everything else, and outputs $\cY$ if $B=1$ and $\emptyset$ 
otherwise.  
\end{theorem}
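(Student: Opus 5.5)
The plan is to combine the reformulation \eqref{eq:mu_type_II_x0} with the Kraft--Le Cam bound (Theorem \ref{thm:kraft_le_cam}), once the key claim \eqref{eq:tv_zero} is in hand. Let $\phi(Z) = 1\{Y_{n+1} \notin C_n(X_{n+1})\}$. Conditional validity \eqref{eq:cond_validity1} is equivalent to marginal validity over $\cP_\cond$, as in \eqref{eq:cond_validity2}, so $\sup_{P\in\cP_\cond}\E_P[\phi(Z)]\le\alpha$. Theorem \ref{thm:kraft_le_cam} with $\cQ=\{Q_0\}$ then gives $\E_{Q_0}[\phi(Z)] \le \alpha + \TV(\conv(\cP_\cond),\{Q_0\})$. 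If \eqref{eq:tv_zero} holds, we get $\E_{Q_0}[1-\phi(Z)]\ge 1-\alpha$. Plugging this into \eqref{eq:mu_type_II_x0}, which follows from the same Fubini computation as \eqref{eq:mu_integral} with $\delta_{x_0}$ in place of $P_X$, yields $\E_P[\mu(C_n(x_0))]\ge\mu(\cY)(1-\alpha)$. So everything reduces to proving \eqref{eq:tv_zero}.

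For \eqref{eq:tv_zero} I will build, for each $\epsilon>0$, a distribution $P_\epsilon\in\cP_\cond$ (a single element, not a mixture) with $\TV(P_\epsilon,Q_0)\le\epsilon$.

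\emph{Construction.} Since $x_0$ is a non-atom of $P_X$ lying in its support, every neighborhood of $x_0$ has positive mass, and we can take a measurable neighborhood $B$ of $x_0$ with $0<P_X(B)=:\eta$ as small as desired. Define $P_{Y|X}'$ by setting $P'_{Y|X=x}=\mu/\mu(\cY)$ for $x\in B$ and $P'_{Y|X=x}=P_{Y|X=x}$ otherwise, and set $P'_{X,Y}=P_X\times P'_{Y|X}$. Then $x_0\in\supp(P_X)$, so
\[
P_\epsilon = (P'_{X,Y})^n\times\big(\delta_{x_0}\times P'_{Y|X=x_0}\big) = (P'_{X,Y})^n\times\Big(\delta_{x_0}\times\frac{\mu}{\mu(\cY)}\Big)\in\cP_\cond .
\]
The last coordinate of $P_\epsilon$ agrees exactly with that of $Q_0$. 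The first $n$ coordinates differ only through the event that some $X_i\in B$, so $\TV(P'_{X,Y},P_{X,Y})\le\eta$. Subadditivity of TV over product measures then gives $\TV(P_\epsilon,Q_0)\le n\eta$, and choosing $\eta\le\epsilon/n$ finishes the argument.

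The main obstacle is this construction, specifically the existence of neighborhoods $B$ of $x_0$ with arbitrarily small positive mass. This requires $P_X(\{x_0\})=0$, together with continuity of measure along a decreasing sequence of neighborhoods shrinking to $\{x_0\}$, which holds in a metric space. This is exactly where the non-atom assumption enters; if $x_0$ were an atom, we could not alter $P_{Y|X=x_0}$ without perturbing the training data by a fixed amount. A second point to handle is that $P'_{Y|X}$ must be a valid (measurable) kernel. This is immediate because it is defined piecewise on a measurable set.
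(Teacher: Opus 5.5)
Your proof is correct and follows the paper's route: reduce to \eqref{eq:tv_zero} via Theorem~\ref{thm:kraft_le_cam} and \eqref{eq:mu_type_II_x0}, then exhibit a member of $\cP_\cond$ whose conditional law of $Y$ at $x_0$ is $\mu/\mu(\cY)$, and bound its distance to $Q_0$ by $n$ times a single-sample total variation. The only difference is that the paper takes $B=\{x_0\}$, which gives $\TV(P_0,Q_0)=0$ outright because $P_X(\{x_0\})=0$. Since your bound never uses $P_X(B)>0$, the shrinking-neighborhood step you flag as the main obstacle (and the metric-space assumption it needs) is unnecessary, though your version has the mild merit of changing the kernel on a whole neighborhood of $x_0$ rather than only at a $P_X$-null point.
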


\begin{proof}
As before, we write $C_n(X_{n+1}) = \{ y : \phi(Z_1,\dots,Z_n,(X_{n+1},y)) = 0 
\}$ for a test $\phi$, which is without a loss of generality as we may always  
take $\phi(Z) = 1\{Y_{n+1} \notin C_n(X_{n+1})\}$. The assumed conditional
validity property \eqref{eq:cond_validity1} is equivalent to the marginal
property \eqref{eq:cond_validity2} over \smash{$\cP_\cond$}, which means $\phi$ 
is level $\alpha$ over \smash{$\cP_\cond$}. Combining \eqref{eq:mu_type_II_x0}
with Theorem \ref{thm:kraft_le_cam}, we have     
\[
\E_P[\mu(C_n(x_0))] \geq \mu(\cY) \cdot (1-\alpha) - \mu(\cY) \cdot
\TV(\conv(\cP_\cond), \{Q_0\}).
\]
Therefore, to prove the theorem, it suffices to verify \eqref{eq:tv_zero}. To
this end, define    
\[
\tilde{P}_{Y|X=x} = 
\begin{cases}
P_{Y|X=x} & \text{if $x \not= x_0$}, \\
\frac{\mu}{\mu(\cY)} & \text{if $x = x_0$},
\end{cases}
\]
and \smash{$P_0 = (P_X \times \tilde{P}_{Y|X})^n \times (\delta_{x_0} \times 
  \frac{\mu}{\mu(\cY)})$}. Then \smash{$P_0 \in \cP_\cond$}, and yet $\TV(P_0, 
Q_0) = 0$, since  
\begin{align*}
\TV(P_0, Q_0) 
&\leq n \cdot \TV(P_X \times \tilde{P}_{Y|X}, P_X \times P_{Y|X}) \\
&= n \cdot \sup_S \bigg| \int_S \d{P}_X(x) \, \d\tilde{P}_{Y|X=x}(y) - 
\int_S \d{P}_X(x) \, \d{P}_{Y|X=x}(y) \bigg| \\
&= n \cdot \sup_S \bigg| \int_{S \setminus \{x_0\}} \d{P}_X(x) \,
\d\tilde{P}_{Y|X=x}(y) - \int_{S \setminus \{x_0\} } \d{P}_X(x) \,
\d{P}_{Y|X=x}(y) \bigg| \\   
&= 0,
\end{align*}
where the second-to-last line holds because $x_0$ is a non-atom point of $P_X$,
and the last line holds because \smash{$\tilde{P}_{Y|X=x} = P_{Y|X=x}$} for $x
\not= x_0$, by construction. This completes the proof of \eqref{eq:tv_zero}, and
the theorem.
\end{proof}

\subsection{Infinite measure case}

In the infinite measure case $\mu(\cY) = \infty$, we can no longer normalize
$\mu$ over all of $\cY$ to form the distribution in
\eqref{eq:alternative_q0}. However, assuming that $\mu$ is $\sigma$-finite, 
which is true of Lebesgue measure on $\cY = \R$, we can adapt Theorem
\ref{thm:hardness_finite} using a truncation to reproduce a well-known
impossibility result from the conformal literature (cf.\ Proposition
4 in \citealt{vovk2012conditional}, or Lemma 1 in \citealt{lei2014distribution}).   

\begin{corollary}[Conditional hardness, infinite measure case]
\label{cor:hardness_infinite}
Assume $\mu(\cY) = \infty$ and assume $\mu$ is $\sigma$-finite. Consider a  
mapping which takes $X_{n+1}$ and $Z_1,\dots,Z_n$ to a prediction set
$C_n(X_{n+1})$. If this method satisfies conditional validity over
\smash{$\cP_\iid$}, as in \eqref{eq:cond_validity1}, then for any
\smash{$P_{X,Y} = P_X \times P_{Y|X}$} and any non-atom point $x_0 \in
\supp(P_X)$, it holds that  
\[
\E_P[\mu(C_n(x_0))] = \infty.
\]
\end{corollary}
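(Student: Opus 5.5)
We reduce to Theorem \ref{thm:hardness_finite} by truncating to sets of finite measure. Since $\mu$ is $\sigma$-finite with $\mu(\cY) = \infty$, for any $M > 0$ we can find a measurable $\cY_M \subseteq \cY$ with $M \leq \mu(\cY_M) < \infty$. To see this, take an increasing exhaustion $\cY = \bigcup_k A_k$ with $\mu(A_k) < \infty$; by continuity from below, $\mu(A_k) \to \infty$, so we take $\cY_M = A_k$ for $k$ large enough.

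Fix such a $\cY_M$ and let $\mu_M = \mu(\cdot \cap \cY_M)$. This is a finite measure on $\cY$, with total mass $\mu_M(\cY) = \mu(\cY_M)$. The hypothesis of Theorem \ref{thm:hardness_finite} is conditional validity \eqref{eq:cond_validity1} over $\cP_\iid$, and it does not involve $\mu$ at all. So we can apply Theorem \ref{thm:hardness_finite} to the same method, but with $\mu_M$ in place of $\mu$. It gives, for the given $P$ and non-atom $x_0 \in \supp(P_X)$,
\[
\E_P[\mu(C_n(x_0))] \geq \E_P[\mu_M(C_n(x_0))] \geq \mu(\cY_M) \cdot (1-\alpha) \geq M(1-\alpha).
\]
Letting $M \to \infty$ gives $\E_P[\mu(C_n(x_0))] = \infty$, provided $\alpha < 1$. (If $\alpha = 1$, the conclusion does not follow, since the empty set is valid; we assume $\alpha < 1$, as is implicit in the statement.)

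The main point to check is that the proof of Theorem \ref{thm:hardness_finite} uses $\mu$ only through the finite measure that defines $Q_0$ and through the construction of $P_0$. With $\mu_M$, the law $\mu_M/\mu_M(\cY)$ is a probability measure supported on $\cY_M$. We must confirm that $P_0$, built with $\tilde{P}_{Y|X=x_0} = \mu_M/\mu_M(\cY)$, is still a legitimate member of $\cP_\cond$. It is, because $\tilde P_{Y|X}$ is a valid Markov kernel: it is modified only at the single point $x_0$, which is a $P_X$-null set since $x_0$ is a non-atom. Also, $x_0 \in \supp(P_X)$ is unchanged. The total variation computation then goes through verbatim, and I expect no further obstacle.
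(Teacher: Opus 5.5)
Your proposal is correct and takes essentially the same route as the paper: truncate $\mu$ to sets of finite but arbitrarily large measure, apply Theorem~\ref{thm:hardness_finite} to the truncated measure, use monotonicity $\mu(C_n(x_0)) \geq \mu(C_n(x_0) \cap \cY_M)$, and let the truncation grow. The paper normalizes to probability measures $\nu_k$ rather than using $\mu_M$ directly, which makes no difference; your observation that $\alpha < 1$ is needed is also correct and is left implicit in the paper.
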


\begin{proof}
Because $\mu$ is $\sigma$-finite and $\mu(\cY) = \infty$, there exists a
sequence of measurable sets $A_k \subseteq \cY$ such that $0 < \mu(A_k) <
\infty$ for all $k$ and $\mu(A_k) \to \infty$ as $k \to \infty$. For each $k$,
define the probability measure $\nu_k$ on $\cY$ by \smash{$\nu_k(B) =
  \frac{\mu(B \cap A_k)}{\mu(A_k)}$}. Then by Theorem \ref{thm:hardness_finite}
applied to $\nu_k$, we have $\E_P[\nu_k(C_n(x_0))] \geq 1-\alpha$, or
equivalently 
\[
\E_P[\mu(C_n(x_0) \cap A_k)] \geq (1-\alpha) \cdot \mu(A_k).
\] 
By monotonicity of $\mu$, we have $\mu(C_n(x_0)) \geq \mu(C_n(x_0) \cap A_k)$, 
hence 
\[
\E_P[\mu(C_n(x_0))] \geq (1-\alpha) \cdot \mu(A_k).
\]
Sending $k \to \infty$ completes the proof. 
\end{proof}

\section{Optimality}
\label{sec:optimality}

We examine another application of the representation for efficiency from Section 
\ref{sec:efficiency}, to study the question of optimality: characterizing the
precise form of optimally efficient methods for prediction sets, among all
methods with valid coverage over the class \smash{$\cP_\exch = \{ P : \text{$P$
    is exchangeable} \}$}. As explained in the discsussion following
\eqref{eq:mu_type_II}, this can be reduced to a question of optimal power among
all tests which are valid over \smash{$\cP_\exch$}. Naturally, Neyman-Pearson
theory comes to mind, and this will indeed be our main tool, after carefully
setting up and reformulating the problem, as we describe below.     

For a fixed \smash{$P \in \cP_\iid = \{ P : P = P_{X,Y}^{n+1} \;\, \text{for
    some $P_{X,Y} = P_X \times P_{Y|X}$} \}$}, we consider 
\begin{equation}
\begin{alignedat}{2}
\label{eq:optimality1}
&\minimize \quad &&\E_P[\mu(C_n(X_{n+1})] \\
&\st \quad &&\P_{P'}\{Y_{n+1} \in C_n(X_{n+1})\} \geq 1-\alpha, \; \text{for all
  $P' \in \cP_\exch$}, 
\end{alignedat}
\end{equation}
where the minimization is understood to be over mappings which take
$X_{n+1},Z_1,\dots,Z_n$ to a set $C_n(X_{n+1})$. Any such mapping can be written
as $C_n(X_{n+1}) = \{ y : \phi(Z_1,\dots,Z_n,(X_{n+1},y)) = 0 \}$ for a test
$\phi$ (we may always take $\phi(Z) = 1\{Y_{n+1} \notin
C_n(X_{n+1})\}$). Assuming $\mu(\cY) < \infty$, we now recall the formulation
\eqref{eq:mu_type_II} for the expected measure of the prediction set, with the 
alternative $Q$ defined in \eqref{eq:alternative_q}. The optimization
\eqref{eq:optimality1} is hence equivalent to
\begin{equation}
\begin{alignedat}{2}
\label{eq:optimality2}
&\maximize \quad &&\E_Q[\phi(Z)] \\
&\st \quad &&\E_{P'}[\phi(Z)] \leq \alpha, \; \text{for all $P' \in \cP_\exch$}.   
\end{alignedat}
\end{equation}
We develop one more reformulation. Writing \smash{$\E_Q[\phi(Z)] = \int
  \E_Q[\phi(Z) | U=u] \, \d{Q}_U(u)$} for \smash{$U = \Zbag$}, consider
optimizing the integrand for a fixed \smash{$u = \zbag$},  
\begin{equation}
\begin{alignedat}{2}
\label{eq:optimality3}
&\maximize \quad &&\E_{Q_u}[\phi(Z)] \\
&\st \quad &&\E_{P'}[\phi(Z)] \leq \alpha, \; \text{for all $P' \in \cP_\exch$},
\end{alignedat}
\end{equation}
where $Q_u$ denotes the distribution of $Z|U=u$ when $Z \sim Q$. If the
optimal test in \eqref{eq:optimality3} does not depend on $u$, then it will also
be optimal for \eqref{eq:optimality2}. This will indeed be the case and we will
focus on \eqref{eq:optimality3} henceforth. 

At this point, we are well-positioned to invoke Neyman-Pearson theory. Because
\eqref{eq:optimality3} involves a composite null hypothesis, we must first
establish a least favorable distribution. Once established, the Neyman-Pearson
lemma characterizes optimal tests. This approach traces back to the celebrated
work of \citet{neyman1933problem, neyman1933testing}, with extensions to
composite hypotheses and least favorable distributions by
\citet{wald1939contributions, lehmann1948most, wald1950statistical}. We state a 
version from \citet{lehmann2022testing}.

\begin{theorem}[Theorems 3.2.1 and 3.8.1 in \citealp{lehmann2022testing}] 
\label{thm:least_favorable}
Let $\cP$ be a class of null distributions and let $Q$ be a point
alternative. For a distribution $\Lambda$ over the class $\cP$, and define
the point null \smash{$P_\Lambda$} by \smash{$P_\Lambda(S) = \int_\cP P(S) \, 
  \d\Lambda(P)$}. Let \smash{$p_\Lambda$} and $q$ denote the densities
(Radon-Nikodym derivatives) of \smash{$P_\Lambda$} and $Q$, respectively, with
respect to a common base measure. Consider a test \smash{$\phi_\Lambda$} 
which satisfies  
\[
\phi_\Lambda(z) = 
\begin{cases}
1 & \text{if $q(z) > k p_\Lambda(z)$}, \\
0 & \text{if $q(z) < k p_\Lambda(z)$}, 
\end{cases}
\] 
and suppose $k$ can be chosen such that \smash{$\E_{P_\Lambda}[\phi_\Lambda(Z)]
  = \sup_{P \in \cP} \E_P[\phi_\Lambda(Z)] = \alpha$}. Then $\phi_\Lambda$
achieves the highest power against $Q$ among all tests which are level over
$\cP$, and the distribution $\Lambda$ is called \emph{least favorable}. 
\end{theorem}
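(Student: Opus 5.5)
The argument is the classical least-favorable-prior version of Neyman--Pearson: reduce the composite null to the single mixture $P_\Lambda$, apply the simple-vs-simple lemma there, and then use the level-$\alpha$ condition over $\cP$ to transfer optimality back to the composite problem.

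Let $\phi'$ be any test that is level $\alpha$ over $\cP$, i.e.\ $\E_P[\phi'(Z)] \le \alpha$ for every $P \in \cP$. The goal is $\E_Q[\phi'(Z)] \le \E_Q[\phi_\Lambda(Z)]$.

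The first step is to show that $\phi'$ is also level $\alpha$ for the single mixture null $P_\Lambda$. This follows from Fubini (Tonelli, since $\phi'\in[0,1]$):
\[
\E_{P_\Lambda}[\phi'(Z)] = \int_\cP \E_P[\phi'(Z)] \, \d\Lambda(P) \le \alpha .
\]
The one technical point here is that $P \mapsto P(S)$ must be measurable, so that $P_\Lambda$ is well defined. This is implicit in the statement's definition of $P_\Lambda$, so I treat it as given.

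The second step compares $\phi_\Lambda$ and $\phi'$ pointwise. Consider the integrand
\[
(\phi_\Lambda(z) - \phi'(z))\,(q(z) - k\,p_\Lambda(z)).
\]
It is nonnegative everywhere. Where $q > k p_\Lambda$ we have $\phi_\Lambda = 1 \ge \phi'$; where $q < k p_\Lambda$ we have $\phi_\Lambda = 0 \le \phi'$; and on the boundary the second factor vanishes. Integrating against the common base measure gives
\[
\E_Q[\phi_\Lambda] - \E_Q[\phi'] \;\ge\; k\,\big(\E_{P_\Lambda}[\phi_\Lambda] - \E_{P_\Lambda}[\phi']\big) \;\ge\; k(\alpha - \alpha) = 0 .
\]
The last inequality uses $\E_{P_\Lambda}[\phi_\Lambda] = \alpha$ (by the choice of $k$) together with the first step, $\E_{P_\Lambda}[\phi'] \le \alpha$. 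This needs $k \ge 0$, which I take as implicit in the threshold form: if $k < 0$, then $\phi_\Lambda \equiv 1$ wherever $q > 0$, and the bound is immediate anyway since $\phi'\le 1$.

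Finally, $\phi_\Lambda$ is itself an admissible competitor, because the hypothesis $\sup_{P\in\cP}\E_P[\phi_\Lambda(Z)] = \alpha$ says it is level $\alpha$ over $\cP$. So $\phi_\Lambda$ maximizes power against $Q$ among all tests that are level $\alpha$ over $\cP$.

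For the "least favorable" label: any other mixing distribution $\Lambda'$ has a most powerful test $\phi_{\Lambda'}$ at level $\alpha$ against $P_{\Lambda'}$. Since $\phi_\Lambda$ is level $\alpha$ for $P_{\Lambda'}$ (again by Fubini), $\phi_{\Lambda'}$ has power at least that of $\phi_\Lambda$. So $\Lambda$ yields the smallest optimal power, which justifies the name.

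I expect no real obstacle. The only delicate points are the measurability behind the Fubini step and the sign of $k$, both handled above.
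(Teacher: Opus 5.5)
Your proof is correct and is essentially the standard argument behind the cited Theorems 3.2.1 and 3.8.1 of \citet{lehmann2022testing}, to which the paper defers without giving its own proof. The steps are the same: averaging over $\Lambda$ makes any test that is level $\alpha$ over $\cP$ level $\alpha$ for $P_\Lambda$; the pointwise Neyman--Pearson inequality $(\phi_\Lambda-\phi')(q-kp_\Lambda)\ge 0$ with $k\ge 0$ then gives $\E_Q[\phi_\Lambda]\ge \E_Q[\phi']$; and $\sup_{P\in\cP}\E_P[\phi_\Lambda(Z)]=\alpha$ makes $\phi_\Lambda$ itself an admissible competitor, while your handling of measurability, of the sign of $k$, and of the ``least favorable'' label matches that reference.
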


\subsection{Optimal conformal score}

Our next step is to establish a least favorable distribution for testing the 
composite null \smash{$H_0 : P' \in \cP_\exch$} versus the point alternative 
$H_1 : P' = Q_u$, where recall \smash{$u = \zbag$} and $Q_u$ denotes the
distribution of $Z|U=u$ when $Z \sim Q$. First, we work out $Q_u$
explicitly. Let \smash{$p_{X,Y}$} be the density of \smash{$P_{X,Y}$} with
respect to $\rho \times \mu$, for some measure $\rho$ on $\cX$. Factorize
\smash{$p_{X,Y} = p_X \times p_{Y|X}$}. Then $Q_u$ is supported on permutations
of $z$, with   
\[
\P_{Q_u}\{ Z = z_\sigma \} = 
\frac{\prod_{i=1}^n p_{X,Y}(z_{\sigma(i)}) \cdot p_X(x_{\sigma(n+1)})}
{\sum_{\pi \in \cS_{n+1}} \prod_{i=1}^n p_{X,Y}(z_{\pi(i)}) \cdot
  p_X(x_{\pi(n+1)})}. 
\]
If we multiply and divide in the numerator by
\smash{$p_{Y|X=x_{\sigma(n+1)}}(y_{\sigma(n+1)})$}, and multiply and divide in
the denominator by \smash{$p_{Y|X=x_{\pi(n+1)}}(y_{\pi(n+1)})$}, then observe
that the numerator and denominator share a common factor of
\smash{$\prod_{i=1}^{n+1} p_{X,Y}(x_i,y_i)$}, which cancels, we are left with   
\begin{equation}
\label{eq:alternative_qu}
\P_{Q_u}\{ Z = z_\sigma \} =
\frac{c_u}{p_{Y|X=x_{\sigma(n+1)}}(y_{\sigma(n+1)})},  
\end{equation}
where \smash{$c_u = \big[ \sum_{\pi \in \cS_{n+1}}
  \frac{1}{p_{Y|X=x_{\sigma(n+1)}}(y_{\sigma(n+1)})} \big]^{-1}$} is a
normalizing constant. 

Consider now the choice \smash{$P'_u = \frac{1}{n+1} \sum_{\sigma \in \cS_{n+1}}
  \delta_{z_\sigma} \in \cP_\exch$}. We will show that $P'_u$ is least favorable
relative to $Q_u$. (By this, we mean that the distribution $\Lambda$ which
places all of its mass on $P'_u$ is least favorable.) Note that $P'_u$ and $Q_u$
are absolutely continuous with respect to counting measure on the multiset
\smash{$u = \zbag$}. We denote their densities by $p'_u$ and $q_u$,
respectively. Then the Neyman-Pearson optimal test for $P'_u$ versus $Q_u$ 
takes the form   
\[
\phi(z_\sigma) = 
\begin{cases}
1 & \text{if $q_u(z_\sigma) > k_u p'_u(z_\sigma)$}, \\ 
0 & \text{if $q_u(z_\sigma) < k_u p'_u(z_\sigma)$},
\end{cases}
\] 
for a threshold $k_u$. Using $p'_u(z_\sigma) = 1$, and the form of $q_u$ as
specified by \eqref{eq:alternative_qu}, this can be rewritten as
\begin{equation}
\label{eq:neyman_pearson}
\phi(z_\sigma) = 
\begin{cases}
1 & \text{if $1/p_{Y|X=x_{\sigma(n+1)}}(y_{\sigma(n+1)}) > k_u$}, \\ 
0 & \text{if $1/p_{Y|X=x_{\sigma(n+1)}}(y_{\sigma(n+1)}) < k_u$},
\end{cases}
\end{equation}
for a redefined threshold $k_u$. This threshold is chosen so that
\begin{equation}
\label{eq:exact_type_I}
\E_{P'_u}[\phi(Z)] = \frac{1}{(n+1)!} \sum_{\sigma \in \cS_{n+1}} \phi(z_\sigma)
= \alpha.  
\end{equation}
The key observation is these two conditions \eqref{eq:neyman_pearson},  
\eqref{eq:exact_type_I} are fulfilled by a randomized permutation test, as
explained below, and proved in Appendix \ref{app:phi_star}.

\begin{proposition}
\label{prop:phi_star}
Let \smash{$T(Z) =  1/p_{Y|X=X_{n+1}}(Y_{n+1})$}, and define the test
\smash{$\phi^*(Z) = 1\{p^* \leq \alpha\}$}, where
\[
p^* = \frac{1}{(n+1)!} \sum_{\sigma \in \cS_{n+1}} \Big( 1\{ T(Z_\sigma) >
T(Z) \} + V \cdot 1\{ T(Z_\sigma) = T(Z) \} \Big),
\]
for $V \sim \mathrm{Unif}[0,1]$, independently of everything else. Then $\phi^*$
satisfies \eqref{eq:neyman_pearson}, \eqref{eq:exact_type_I}.
\end{proposition}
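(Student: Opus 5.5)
Fix a multiset $u = \zbag$ and work throughout conditionally on it. The plan is to show that, as $\sigma$ ranges over $\cS_{n+1}$, the randomized p-value $p^*(z_\sigma)$ depends on $\sigma$ only through $t = T(z_\sigma)$, and that it is a monotone function of $t$. Monotonicity gives the threshold form \eqref{eq:neyman_pearson}. A uniformity argument under $P'_u$ then gives \eqref{eq:exact_type_I}.

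\textbf{Reduction to a function of $t$.} Write $t_i = 1/p_{Y|X=x_i}(y_i)$ for $i=1,\dots,n+1$, allowing the value $+\infty$ when the density vanishes. For any $\sigma$ we have $T(z_\sigma) = t_{\sigma(n+1)}$. Each index $i$ occupies position $n+1$ for exactly $n!$ permutations. Hence, for $t = T(z_\sigma)$,
\[
p^*(z_\sigma) = A(t) + V \cdot E(t), \qquad A(t) = \frac{1}{n+1}\sum_{i=1}^{n+1} 1\{t_i > t\}, \quad E(t) = \frac{1}{n+1}\sum_{i=1}^{n+1} 1\{t_i = t\}.
\]

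\textbf{Threshold structure.} For distinct values $t < t'$ among the $t_i$, we have $A(t) \geq A(t') + E(t')$. Consider the disjoint intervals $[A(t), A(t)+E(t)]$, indexed by the distinct values $t$. They tile $[0,1]$, ordered decreasingly in $t$.
\begin{itemize}
\item If $A(t) + E(t) \leq \alpha$, the test rejects (surely, or almost surely in $V$).
\item If $A(t) > \alpha$, the test accepts surely.
\item Randomization can occur only at a value $t$ with $A(t) \leq \alpha < A(t) + E(t)$. By the tiling, there is at most one such value.
\end{itemize}
Define $k_u$ to be this value. If no such value exists (this can only happen when $\alpha$ is an interval endpoint), take $k_u$ to be any point strictly between the last rejected and the first accepted value.

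We then check \eqref{eq:neyman_pearson}.
\begin{itemize}
\item If $t > k_u$, then $A(t) + E(t) \leq A(k_u) \leq \alpha$, so $\phi^* = 1$.
\item If $t < k_u$, then $A(t) \geq A(k_u) + E(k_u) > \alpha$, so $\phi^* = 0$.
\end{itemize}
Crucially, $k_u$ depends only on $u$ and $\alpha$, not on $\sigma$ or $V$. Only the boundary case $t = k_u$ is randomized, which Theorem \ref{thm:least_favorable} permits.

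\textbf{Exact size.} Under $P'_u$, $Z = z_\sigma$ with $\sigma$ uniform, so $T(Z)$ takes each distinct value $t$ with probability $E(t)$. Conditional on $T(Z) = t$, the statistic $p^* = A(t) + V E(t)$ is uniform on an interval of length $E(t)$. Since these intervals tile $[0,1]$, $p^*$ is exactly $\mathrm{Unif}[0,1]$ under $P'_u$. Therefore
\[
\E_{P'_u}[\phi^*(Z)] = \P\{p^* \leq \alpha\} = \alpha,
\]
where the expectation also averages over $V$. This is \eqref{eq:exact_type_I} understood for randomized tests.

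\textbf{Main obstacle.} The step needing the most care is the tie and boundary bookkeeping. It has three parts: showing that $k_u$ can be chosen deterministically, handling the degenerate case where $\alpha$ lands exactly on an interval endpoint, and treating infinite scores (zero density). I would handle all three through the interval-tiling picture above, which makes each case transparent.
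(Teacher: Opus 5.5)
Your proof is correct. It uses the same three-way case split as the paper: your $A(t)$ and $E(t)$ are exactly $N^\sigma_+/N$ and $N^\sigma_=/N$ with $N=(n+1)!$, and your conditions $A+E\le\alpha$, $A>\alpha$, $A\le\alpha<A+E$ are its three cases.

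The difference is in how the pieces are assembled. The paper first writes down the textbook randomized permutation test, with explicit threshold $T_{(k)}$, $k=N-\lfloor N\alpha\rfloor$, and boundary rejection probability $(N\alpha-N_+)/N_=$. It takes that test's exact size from Theorem 17.2.1 in \citet{lehmann2022testing}, and uses the case split only to verify $\E_V[\phi^*]=\phi$. You instead define $k_u$ implicitly as the value whose interval contains $\alpha$. You then prove exact size directly by showing, via the tiling, that $p^*\sim\mathrm{Unif}[0,1]$ under $P'_u$.

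Your route is self-contained, makes the exactness transparent, and also covers zero densities. The paper's route gives the threshold and randomization probability in closed form. It also identifies $\phi^*$ with the standard test whose validity over all of $\cP_\exch$ is invoked immediately afterwards.

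One small correction to your bookkeeping. It is the half-open intervals $[A(t),A(t)+E(t))$ that are disjoint, and they tile $[0,1)$. So for every $\alpha\in[0,1)$ there is a value with $A(t)\le\alpha<A(t)+E(t)$; when $\alpha=A(t)$, the rejection probability at that value is simply zero. The only case with no such value is $\alpha=1$. There every value is rejected, there is no ``first accepted value,'' and $k_u$ should just be taken below $\min_i t_i$. None of this affects correctness.
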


As the Neyman-Pearson optimal test for $P'_u$ versus $Q_u$ takes  
the form of a randomized permutation test $\phi^*$, as defined in Proposition  
\ref{prop:phi_star}, we know that it controls type I error over all of
$\cP_\exch$ (as do all randomized permutation tests; see, e.g., Theorem 17.2.1
in \citealt{lehmann2022testing}). Thus, we have succeeded in identifying a
least favorable distribution, and by Theorem \ref{thm:least_favorable}, we know
that $\phi^*$ solves \eqref{eq:optimality3}. Finally, as $\phi^*$ does not
depend on $u$ (note the choice of test statistic \smash{$T(Z) =
  1/p_{Y|X=X_{n+1}}(Y_{n+1})$} depends only on $Q$, not $Q_u$), we conclude that
$\phi^*$ solves \eqref{eq:optimality2}. Transcribing this back to prediction
sets yields the following result. 

\begin{theorem}[Optimality]
\label{thm:optimality}
Assume $\mu$ is a $\sigma$-finite measure on $\cY$. Fix any
distribution \smash{$P_{X,Y} = P_X \times P_{Y|X}$}, denote by
\smash{$p_{Y|X=x}$} the density of \smash{$P_{Y|X=x}$} with respect to $\mu$,
and assume that \smash{$p_{Y|X=x}(y) > 0$} for all $y \in \cY$, and $P_X$-almost
every $x$. Let \smash{$P = P_{X,Y}^{n+1}$}. Among all mappings which take 
$X_{n+1},Z_1,\dots,Z_n$ to a prediction set $C_n(X_{n+1})$, and provide
$1-\alpha$ coverage over the class \smash{$\cP_\exch$} of exchangeable
distributions, the optimal efficiency $\E_P[\mu(C_n(X_{n+1})]$ is attained by
the following randomized conformal predictor:   
\begin{equation}
\label{eq:cp_optimal_set}
C^*_n(X_{n+1}) = \bigg\{ y : \frac{1}{n+1} \sum_{i=1}^{n+1} \Big( 1\{ s^y_i >
s^y_{n+1} \} + V \cdot 1\{ s^y_i = s^y_{n+1} \} \Big) > \alpha \bigg\},
\end{equation}
where $V \sim \mathrm{Unif}[0,1]$, independently of everything else, the scores
are as in \eqref{eq:cp_score}, and the score function is 
\begin{equation}
\label{eq:cp_optimal_score}
s(z_{n+1}; z) = \frac{1}{p_{Y|X=x_{n+1}}(y_{n+1})}.
\end{equation} 
In other words, the construction given in \eqref{eq:cp_optimal_set},
\eqref{eq:cp_optimal_score} solves problem \eqref{eq:optimality1}.    
\end{theorem}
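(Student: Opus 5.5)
The proof splits into two cases, according to whether $\mu(\cY)$ is finite or infinite. In the finite case I assemble the reductions already set up in this section. In the infinite case I truncate, as in Corollary~\ref{cor:hardness_infinite}.

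\textbf{Finite case.} Assume $\mu(\cY)<\infty$.

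First, write any candidate set as $C_n(X_{n+1})=\{y:\phi(Z_1,\dots,Z_n,(X_{n+1},y))=0\}$ with $\phi(Z)=1\{Y_{n+1}\notin C_n(X_{n+1})\}$. This rewriting allows $\phi$ to be randomized, which covers $V$.

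By Proposition~\ref{prop:duality}, the coverage constraint in \eqref{eq:optimality1} is equivalent to $\phi$ being level $\alpha$ over $\cP_\exch$. By \eqref{eq:mu_type_II}, the objective equals $\mu(\cY)\cdot(1-\E_Q[\phi(Z)])$. Hence \eqref{eq:optimality1} and \eqref{eq:optimality2} are the same problem.

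Next, disintegrate with respect to $U=\Zbag$: $\E_Q[\phi]=\int \E_{Q_u}[\phi]\,\d Q_U(u)$. Any $\phi$ feasible for \eqref{eq:optimality2} is feasible for \eqref{eq:optimality3} at every $u$. So it suffices to exhibit one $\phi^*$ that is feasible and maximizes $\E_{Q_u}[\phi]$ for $Q_U$-a.e.\ $u$ simultaneously.

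To do this, take the point mass at $P'_u$, the uniform distribution on permutations of $z$. Compute $q_u$ from \eqref{eq:alternative_qu}; this uses positivity of $p_{Y|X}$ so that $1/p$ is finite and $c_u$ is well defined. Proposition~\ref{prop:phi_star} then says $\phi^*$ satisfies \eqref{eq:neyman_pearson} and \eqref{eq:exact_type_I}.

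Since $\phi^*$ is a randomized permutation test, $\sup_{P'\in\cP_\exch}\E_{P'}[\phi^*]\le\alpha$. Combined with \eqref{eq:exact_type_I}, the supremum equals $\alpha$ and is attained at $P'_u$. Theorem~\ref{thm:least_favorable} therefore gives optimality of $\phi^*$ for \eqref{eq:optimality3} at each $u$. Because $\phi^*$ is built from $T$ alone and does not depend on $u$, integrating over $u$ shows it solves \eqref{eq:optimality2}.

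Finally, convert back to sets. The score \eqref{eq:cp_optimal_score} trivially satisfies the symmetry condition \eqref{eq:score_symmetric}, so $T(Z_\sigma)=s^{Y_{n+1}}_{\sigma(n+1)}$. Collapsing the sum over $\sigma$ to a sum over $i$ turns $\{y:\phi^*(Z^y)=0\}$ into \eqref{eq:cp_optimal_set}.

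\textbf{Infinite case.} Here $\mu(\cY)=\infty$ but $\mu$ is $\sigma$-finite, and I expect this to be the main obstacle. The objective is then generally infinite for every valid method, or needs care to make sense.

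My plan is to choose $A_k\uparrow\cY$ with $\mu(A_k)<\infty$, and use the normalized measures $\nu_k=\mu(\cdot\cap A_k)/\mu(A_k)$ as in Corollary~\ref{cor:hardness_infinite}. The finite-case argument applies with $\mu$ replaced by $\nu_k$, since nothing in it required $\nu_k$ to have full support. It shows that the minimizer of $\E_P[\mu(C_n\cap A_k)]$ is the conformal test with score $1/q_k(y\mid x)$, where $q_k$ is the density of $P_{Y|X}$ with respect to $\nu_k$, that is, $q_k \propto p_{Y|X}\cdot 1_{A_k}$ up to $\mu(A_k)$.

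The difficulty is that this score is infinite off $A_k$, so it is not literally \eqref{eq:cp_optimal_score}. What I would try is to compare directly. For any feasible $C_n$, integrate over $y\in\cY$ rather than over $A_k$, applying the pointwise-in-$u$ Neyman--Pearson inequality. The fixed-$u$ argument compares $\sum_\sigma \phi(z_\sigma)/p(y_{\sigma(n+1)}\mid x_{\sigma(n+1)})$ for tests with permutation mean $\le\alpha$. This inequality is purely combinatorial on each finite bag, so it does not require $Q$ to be a probability measure.

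Concretely, I would define the $\sigma$-finite measure $\tilde Q=P_{X,Y}^n\times(P_X\times\mu)$. Because $p>0$, its disintegration over $U$ is again proportional to \eqref{eq:alternative_qu}, with a finite normalizer on each bag. Then $\E_P[\mu(C_n)]=\int(1-\phi)\,\d\tilde Q$. Applying the bagwise optimality and Tonelli, which is justified since the integrand is nonnegative, gives $\E_P[\mu(C^*_n)]\le\E_P[\mu(C_n)]$ for every feasible $C_n$. This holds even when both sides are infinite.

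This unnormalized-$\tilde Q$ route actually handles both cases uniformly. I would present it as the main argument, with the finite case as motivation.
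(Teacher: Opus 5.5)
Your proposal is correct and follows the paper's proof. The finite case is the same chain: $Q$, then $Q_u$, the least favorable $P'_u$, Proposition~\ref{prop:phi_star}, Theorem~\ref{thm:least_favorable}, and integration over $u$. Your unnormalized-$\tilde{Q}$ argument for the $\sigma$-finite case is exactly Appendix~\ref{app:optimality}: positivity of $p_{Y|X}$ gives $\d\tilde{Q}/\d{P} = 1/p_{Y|X=X_{n+1}}(Y_{n+1})$, so $\int (1-\phi)\,\d\tilde{Q} = \E_P[(1-\phi(Z))/p_{Y|X=X_{n+1}}(Y_{n+1})]$. The paper conditions the probability measure $P$ on $U=\Zbag$, which is uniform over permutations, rather than disintegrating the infinite measure $\tilde{Q}$; this avoids that bit of measure theory, and the truncation detour is not needed.
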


\begin{proof}
Assume first that $\mu(\cY) < \infty$. Then the distribution $Q$ in
\eqref{eq:alternative_q} is well-defined and the arguments above establish that
the test $\phi^*$ in Proposition \ref{prop:phi_star} solves
\eqref{eq:optimality2}. The corresponding prediction set is  
\[
C^*_n(X_{n+1}) = \bigg\{ y : \frac{1}{(n+1)!} \sum_{\sigma \in \cS_{n+1}} \Big(
1\{ T(Z_\sigma) > T(Z) \} + V \cdot 1\{ T(Z_\sigma) = T(Z) \} \Big) > \alpha
\bigg\}, 
\]   
which reduces to the claimed form \eqref{eq:cp_optimal_set} with score
$s(z_{n+1}; z) = T(z)$ as in \eqref{eq:cp_optimal_score}, because this score
function is invariant to relabeling $z_1,\dots,z_n$. This completes the proof
for the finite measure case. 

For the general case, permitting infinite measure $\mu(\cY) = \infty$, we can 
adapt the above strategy by using a slightly different perspective which does
not require defining $Q$ as in \eqref{eq:alternative_q}, deferred to Appendix 
\ref{app:optimality}. 
\end{proof}

This result appears to be new, though several closely related results exist in
the literature, as we describe below. We emphasize it not for its novelty, but
as a clear demonstration of the power of the hypothesis-theoretic framework for  
prediction sets. We also remark that the choice of an i.i.d.\ reference
distribution \smash{$P = P_{X,Y}^{n+1}$} with which to measure efficiency is
made for simplicity. The arguments can be extended to an exchangeable
distribution $P$, which leads to a somewhat more complex choice of optimal
score: the inverse conditional density of $Y_{n+1}$ given $X_{n+1}$ and the rest
of the samples $Z_1,\dots,Z_n$.  

\subsection{Related results}

Numerous related results exist in the conformal prediction literature which
characterize the optimal score at the population-level; these results
effectively take $n \to \infty$ (formalized in different ways), and in this
large-sample limit, they all arrive at the same answer: the optimal score is
\smash{$1/p_{Y|X_{n+1}=x_{n+1}}(y_{n+1})$}. This is covered in
Chapter 3.1 of \citet{vovk2022algorithmic}, and in Chapters 5.2 and 5.3 of
\citet{angelopoulos2025theoretical}; see also \citet{lei2013distribution,
  lei2018distribution, sadinle2019least, izbicki2020flexible} for related oracle
constructions. The key distinction is that our result in Theorem
\ref{thm:optimality} is finite-sample, i.e., it is about conformal prediction
itself, not a large-sample analog. 

In terms of finite-sample results, \citet{dandapanthula2025offline,
  hore2026conformal} developed what they call the first and second ``conformal
Neyman-Pearson lemmas'' for confidence sets in nonparametric changepoint
detection. These results are derived by reducing the question of an optimal
score function to one of optimal power in testing, and applying the
Neyman-Pearson lemma. While their problem setting is different than
ours---changepoint detection over i.i.d.\ sequences, with tests of a point null
versus point alternative---the spirit of the construction is similar.   

For prediction sets, the closest finite-sample optimality result we are aware of
appears in Theorem 4.1 of \citet{hoff2023bayes}. This author develops a general
framework for prediction sets, attributed originally to
\citet{faulkenberry1973method}, which at its core is based on conditioning on a
complete sufficient statistic. This is closely related to the perspectives on
Neyman structure and completeness in Section \ref{sec:universality} of this
paper. There are some superficial differences in Hoff's Theorem 4.1 and our
Theorem \ref{thm:optimality}: their result does not allow for covariates, and
they analyze a Bayesian notion of efficiency, where $P$ is replaced by a mixture 
distribution $P_\pi$ for a prior $\pi$ over $\cP$. In our view, these are minor
and one could readily adapt their framework to accommodate covariates, and to 
analyze frequentist efficiency (by taking $\pi$ to be a point mass).

A bigger difference lies in the apparent generality of Hoff's conclusion. 
Theorem 4.1 in \citet{hoff2023bayes} derives a form for the optimal conformal
score based on the posterior predictive density (i.e., optimal among conformal
methods), but falls short of concluding that the corresponding conformal
predictor is optimally efficient among all valid methods for prediction sets. We
suspect that this is due to their choice of constraint set: they consider
methods which obtain exact coverage over the class \smash{$\cP_\iid$} of i.i.d.\
distributions, instead of methods which have valid coverage over the class
\smash{$\cP_\exch$} of exchangeable distributions, as we do in
\eqref{eq:optimality1}. Hoff's focus on exact coverage stems from their use of
the traditional ``two-sided'' definition of bounded completeness
\eqref{eq:boundedly_complete_2s}, which is tied to exact type I error
control. We introduce a ``one-sided'' notion of bounded completeness
\eqref{eq:boundedly_complete_1s}, as a tool for analyzing valid type I error
control. For i.i.d.\ distributions, there is a gap between these two
definitions: the multiset \smash{$U = \Zbag$} is boundedly complete according to
\eqref{eq:boundedly_complete_2s} but not \eqref{eq:boundedly_complete_1s} (see
Appendix \ref{app:boundedly_complete}).  In other words, for i.i.d.\
distributions, it seems that there is a fundamental difference in whether
multiset-conditioning is necessary for exact versus valid (but possibly
conservative) type I error control. 

\subsection{Interpretations and illustrations}

The interpretation of the result in Theorem \ref{thm:optimality} is that
efficient conformal methods should approximate the density \smash{$p_{Y|X}$} as
closely as possible. This helps us contextualize a choice such as the absolute 
residual score from a predictor of the conditional mean $f(x) =
\E_P[Y|X=x]$. Consider a model in which    
\begin{equation}
\label{eq:homoskedastic}
1/p_{Y|X=x}(y) \;\, \text{is a monotone increasing function of} \;\, |y -
f(x)|,
\end{equation}
such as the conditional normal model $Y|X=x \sim N(f(x), \sigma^2)$. Because
conformal sets are invariant under monotone transformations of the score, we can
view the use of a conditional mean predictor \smash{$\hf = A(z)$} (from an
algorithm $A$) together with residual score \smash{$s(z_{n+1}; z) = |y_{n+1} -
  \hf(x_{n+1})|$} as a plug-in strategy for estimating the conditional density
under \eqref{eq:homoskedastic}.           

The model \eqref{eq:homoskedastic} assumes homoskedasticity. If we instead
consider the heteroskedastic conditional normal model $Y|X=x \sim N(f(x),
\sigma^2(x))$, then 
\begin{equation} 
\label{eq:heteroskedastic}
1/p_{Y|X=x}(y) \;\, \text{is a monotone increasing function of} \;\, \frac{(y -
  f(x))^2}{2\sigma^2(x)} + \log \sigma(x). 
\end{equation}
It is interesting to note that \eqref{eq:heteroskedastic} does \emph{not}
lead to studentized score, but something different---it suggests forming 
conditional mean \smash{$\hf = A(z)$} and variance \smash{$\hat\sigma = B(z)$}  
predictors (from algorithms $A,B$), and using
\begin{equation}
\label{eq:efficient_score}
s(z_{n+1}; z) = \frac{(y_{n+1} - \hf(x_{n+1}))^2}{2\hat\sigma^2(x_{n+1})} +
\log \hat\sigma(x_{n+1}), 
\end{equation}
as the corresponding plug-in strategy under
\eqref{eq:heteroskedastic}. Studentized score would keep only the first term
above, or equivalently (again due to invariance under monotone transformations),   
\begin{equation}
\label{eq:student_score}
s(z_{n+1}; z) = \frac{|y_{n+1} - \hf(x_{n+1})|}{\hat\sigma(x_{n+1})}.
\end{equation}
This score has its own benefits (e.g., when \smash{$\hf,\hat\sigma$} are
accurate it brings us closer to conditional coverage, as studied in
\citealt{lei2018distribution}), but is inefficient compared to
\eqref{eq:efficient_score}. The score \eqref{eq:efficient_score} shrinks the 
prediction sets in high-variance regions compared to the studentized score
\eqref{eq:student_score}, due to the second term \smash{$\log
  \hat\sigma(x_{n+1})$}, which sacrifices local coverage but improves
efficiency. Figure \ref{fig:efficient_score} provides an illustration.  

\begin{figure}[H]
\centering
\includegraphics[width=0.675\textwidth]{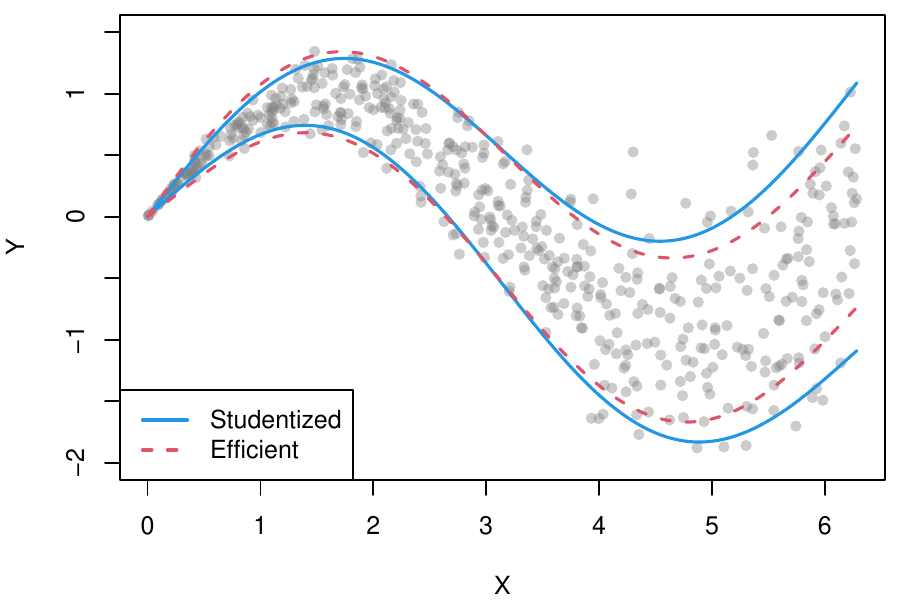}
\caption{Simple example of conformal prediction intervals using studentized
  \eqref{eq:student_score} and efficient \eqref{eq:efficient_score} scores in
  the Gaussian heteroskedastic model.  See Appendix \ref{app:efficient_score}
  for further explanation and discussion.}
\label{fig:efficient_score}
\end{figure}

\section{Discussion} 

This paper presents a perspective of conformal prediction methods as inverting
tests of exchangeability. This view allows us to reinterpret well-known 
conformal universality and impossibility results as consequences of hypothesis
testing theory from the 1950s. It also facilitates a new optimality result for
conformal prediction as an application of Neyman-Pearson theory. 

Broadly, the goal of this work is to motivate and document the direct relevance
of foundational statistical inference---predominantly developed in the first
half of the 20th century---to modern predictive inference. While we studied 
standard conformal prediction (for exchangeable samples), the relevance of
hypothesis-theoretic tools should extend beyond the standard setting, as many
newer variants of conformal can be seen as inverting hypothesis tests given
partial information about the data \citep{barber2026unifying}. Much ground
remains to be explored in both directions: what hypothesis testing can teach us
about conformal prediction, and vice versa. 

\section*{Acknowledgments}

We thank Emmanuel Cand{\`e}s for helpful comments. R.J.T.\ was supported by the 
Office of Naval Research (ONR) grant N00014-20-1-2787, and R.F.B\ by ONR grant
N00014-24-1-2544. 

An AI model was used to brainstorm the key representation in
\eqref{eq:mu_type_II} of efficiency in terms of type II error. AI was also used
to help identify references, and to implement the example in Figure
\ref{fig:efficient_score}. 

\bibliographystyle{plainnat}
\bibliography{main}

\clearpage
\appendix

\section{Appendix}

\subsection{Proof of Theorem \ref{thm:neyman_structure}}
\label{app:neyman_structure}

Let $\phi$ be a test that is valid over $\cP$, according to 
\eqref{eq:validity}. Let $f(u) = \E[\phi(Z) | U=u] - \alpha$, which by
sufficiency of $U$, does not depend on $P$. Then for any $P_U \in \cP_U$, we
have \smash{$\E_{P_U}[f(U)] = \E_P[\phi(Z)] - \alpha \leq 0$}, by
assumption. Furthermore, $f$ is bounded (by $1-\alpha$), so by bounded
completeness of $U$ with respect to $\cP_U$, it follows that $f(u) \leq 0$ for
$\cP_U$-almost every $u$, which is equivalent to \eqref{eq:neyman_structure}. 

\subsection{Bounded completeness}
\label{app:boundedly_complete}

The traditional definition of bounded completeness of a sufficient statistic $U$ 
for a class $\cP$ states that, for any bounded function $f$,   
\begin{equation}
\label{eq:boundedly_complete_2s}
\E_{P_U}[f(U)] = 0 \;\, \text{for all $P \in \cP$} \quad \implies \quad 
f(u) = 0 \;\, \text{for $\cP_U$-almost every $u$}.  
\end{equation}
In our version \eqref{eq:boundedly_complete_1s}, we have replaced equalities
with inequalities. Our version may be seen as ``one-sided'' bounded
completeness, and the traditional version as ``two-sided'' bounded completeness.    

The one-sided version \eqref{eq:boundedly_complete_1s} is strictly stronger than
the traditional two-sided version \eqref{eq:boundedly_complete_2s}. To see this,
note that we can apply \eqref{eq:boundedly_complete_1s} to both $f$ and $-f$ to
conclude \eqref{eq:boundedly_complete_2s}. However, the reverse implication is
not true.   

To demonstrate this, we first recall that it is well-known that \smash{$U =
  \Zbag$} is boundedly complete in the two-sided sense
\eqref{eq:boundedly_complete_2s} with respect to the class of i.i.d.\ continuous
distributions \citep{fraser1954completeness}. This extends to some discrete
distributions, such as the class of i.i.d.\ Bernoulli distributions for binary
random variables \citep{lehmann1950completeness,
  lehmann1955completeness}. However, consider the following example: draw
i.i.d.\ $Z_1,Z_2 \sim \mathrm{Bern}(p)$ and enumerate the distinct multisets
by \smash{$u_1=\lbag 0,0 \rbag$}, \smash{$u_2 = \lbag 1,0 \rbag$}, and
\smash{$u_3 = \lbag 1,1 \rbag$}. Let $f(u_1) = -1$, $f(u_2) = 1$, and $f(u_3) =
-1$. Then  
\[
\E_p[f(U)] = -(1-p)^2 + 2p(1-p) - p^2 = -(2p-1)^2 \leq 0, \quad \text{for all $p
  \in [0,1]$}. 
\]
However, $f(u_2) > 0$, which contradicts one-sided bounded completeness
\eqref{eq:boundedly_complete_1s}. 

The above example also reveals an interesting difference in the ``richness'' 
of the classes \smash{$\cP_\iid$} and \smash{$\cP_\exch$} of i.i.d.\ and 
exchangeable distributions, respectively. Recall, the arguments in Section
\ref{sec:universality} establish that \smash{$\cP_\exch$} is rich enough to
admit \smash{$U = \Zbag$} as a boundedly complete statistic according
to the one-sided definition in \eqref{eq:boundedly_complete_1s}. But the
above example shows that this is not true for \smash{$\cP_\iid$}, and 
\smash{$U = \Zbag$} is only boundedly complete according to the weaker 
two-sided definition in \eqref{eq:boundedly_complete_2s}.  

\subsection{Proof of claim \eqref{eq:neyman_structure_perm}}
\label{app:neyman_structure_perm}

We assume $\alpha < 1$, as otherwise the claim is vacuous. Note first that
the ``$\impliedby$'' direction of the claim is due to the validity of
permutation tests over each exchangeable distribution of the form
\smash{$\frac{1}{(n+1)!} \sum_{\sigma. \in \cS_{n+1}} \delta_{z_\sigma}$}. 

For the ``$\implies$'' direction of the claim, let $\phi$ satisfy
\eqref{eq:neyman_structure_bag}. Define a test statistic by $T = \phi$ and
permutation p-value $p$ by \eqref{eq:perm_pval}. We will show that under this
construction $\phi(Z) = 1\{p \leq \alpha\}$, and proceed to check this in two
cases. 
\begin{itemize}
\item If $\phi(Z) = 1$, then 
\[
p = \frac{1}{(n+1)!} \sum_{\sigma \in \cS_{n+1}} 1\{ \phi(Z_\sigma) \geq 1 \}
= \frac{1}{(n+1)!} \sum_{\sigma \in \cS_{n+1}} \phi(Z_\sigma) \leq \alpha.
\]
\item If $\phi(Z) = 0$, then 
\[
p = \frac{1}{(n+1)!} \sum_{\sigma \in \cS_{n+1}} 1\{ \phi(Z_\sigma) \geq 0 \} =
1 > \alpha.
\]
\end{itemize}
This completes the proof.

\subsection{Proof of Proposition \ref{prop:phi_star}}
\label{app:phi_star}

Following Chapter 17.2.1 in \citet{lehmann2022testing}, let \smash{$T_{(1)} \leq
  \cdots \leq T_{(N)}$} denote the ordered values of $T(z_\sigma)$ over $\sigma
\in \cS_{n+1}$, where $N = (n+1)!$. Let \smash{$k = N - \lfloor N\alpha
  \rfloor$}, where \smash{$\lfloor N\alpha \rfloor$} is the largest integer less
than or equal to $N\alpha$, and define   
\[
\phi(z_\sigma) = 
\begin{cases}
1 & \text{if $T(z_\sigma) > T_{(k)}$} \\
0 & \text{if $T(z_\sigma) < T_{(k)}$}.
\end{cases}
\]
We see that \eqref{eq:neyman_pearson} is met for \smash{$k_u = T_{(k)}$}. 
Further, for \smash{$N_+ = |\{i : T_{(i)} > T_{(k)}\}|$} and \smash{$N_= =|\{i 
  : T_{(i)} = T_{(k)}\}|$}, set     
\[
\phi(z_\sigma) = \frac{N\alpha - N_+}{N_=} \quad \text{if $T(z_\sigma) = T_{(k)}$}.
\]
Then by Theorem 17.2.1 in \citet{lehmann2022testing}, the test $\phi$ has level
exactly $\alpha$, as in \eqref{eq:exact_type_I}.   

It remains to show that $\phi$ defined above is equivalent to the test $\phi^*$
defined in the proposition; precisely, we will show $\phi = \E_V[\phi^*]$, where
the expectation is over $V$. For this, it helps to rewrite $\phi^*$ in terms of 
its action over $z_\sigma$, $\sigma \in \cS_{n+1}$. This is $\phi^*(z_\sigma) =
1\{p^*(z_\sigma) \leq \alpha\}$, where 
\[
p^*(z_\sigma) = \frac{N^\sigma_+ + V \cdot N^\sigma_=}{N},
\]
and \smash{$N^\sigma_+ = |\{ i : T_{(i)} > T(z_\sigma) \}|$}, \smash{$N^\sigma_= 
  = |\{ i : T_{(i)} = T(z_\sigma) \}|$}. We proceed in cases. Below ``almost
sure'' should be interpreted with respect to $V$.  
\begin{itemize}
\item If \smash{$N^\sigma_+ + N^\sigma_= \leq N\alpha$}, then $p^* \leq \alpha$ 
  almost surely and $\E_V[\phi^*(z_\sigma)] = 1$. In this case, we also know at
  most \smash{$\lfloor N\alpha \rfloor$} statistics are $\geq T(z_\sigma)$, or
  equivalently, more than \smash{$N - \lfloor N\alpha \rfloor = k$} statistics 
  are $< T(z_\sigma)$, thus \smash{$T(z_\sigma) > T_{(k)}$} and $\phi(z_\sigma)
  = 1$.    
\item If \smash{$N^\sigma_+ > N\alpha$}, then $p^* > \alpha$ almost surely and
  $\E_V[\phi^*(z_\sigma)] = 0$. Also, in this case, at least \smash{$\lfloor
    N\alpha \rfloor + 1 =$} $N-k+1$ statistics are $> T(z_\sigma)$, thus
  \smash{$T(z_\sigma) < T_{(k)}$} and $\phi(z_\sigma) = 0$.    
\item If \smash{$N^\sigma_+ \leq N\alpha < N^\sigma_+ + N^\sigma_= $}, then 
  \[
  \E_V[\phi^*(z_\sigma)] = \P\Big\{N^\sigma_+ + V \cdot N^\sigma_= \leq N\alpha
  \Big\} = \frac{N\alpha - N^\sigma_+}{N^\sigma_=}.
  \]
  In this case, $T(z_\sigma) = T_{(k)}$, so \smash{$N_+ = N^\sigma_+$},
  \smash{$N_= = N^\sigma_=$}, and finally $\phi(z_\sigma) =
  \E_V[\phi^*(z_\sigma)]$.   
\end{itemize}
This completes the proof.

\subsection{Proof of Theorem \ref{thm:optimality}, general case}
\label{app:optimality}

In the general case, where $\mu$ is $\sigma$-finite but not necessarily finite,
we can adapt the arguments presented in finite case, as follows. Recall the 
representation \eqref{eq:mu_integral}. Since \smash{$p_{Y|X=x}(y) > 0$} for all
$y$ and $P_X$-almost every $x$, we can rewrite this as 
\[
\E_P[\mu(C_n(X_{n+1}))] = \E_P \bigg[ (1 - \phi(Z)) \cdot
\frac{1}{p_{Y|X=X_{n+1}}(Y_{n+1})} \bigg].  
\]
Next, by the total law of expectation
\[
\E_P[\mu(C_n(X_{n+1}))] = \E_{P_U} \Bigg[ \underbrace{\E_P \bigg[ (1 - \phi(Z))
  \cdot \frac{1}{p_{Y|X=X_{n+1}}(Y_{n+1})} \,\bigg|\, U \bigg]}_{h(\phi,U)}
\Bigg],    
\]
where by exchangeability the conditional expectation is 
\[
h(\phi,u) = \frac{1}{(n+1)!} \sum_{\sigma \in \cS_{n+1}} (1 - \phi(z_\sigma))
\cdot \frac{1}{p_{Y|X=x_{\sigma(n+1)}}(y_{\sigma(n+1)})}.
\]
Fix \smash{$u = \zbag$}, and define $Q_u$ as in \eqref{eq:alternative_qu}. 
Observe that
\[
h(\phi,u) = \frac{1}{c_u (n+1)!} \cdot \E_{Q_u}[1-\phi(Z)].
\]
Hence, even in the general case (where $Q$ itself is not necessarily
well-defined), we have reduced the problem to one of minimizing type II error
against $Q_u$. By the same arguments as given earlier, the test $\phi^*$ in
Proposition \ref{prop:phi_star} has optimal type II error among all valid
level $\alpha$ tests for \smash{$\cP_\exch$} versus $Q_u$. That is, for
any test $\phi$ which is level $\alpha$ for \smash{$\cP_\exch$}, 
\[
h(\phi^*,u) \leq h(\phi,u), \quad \text{for all $u$}. 
\]
Furthermore, since $\phi^*$ does not depend on $u$, we can integrate over $U$ to
obtain  
\[
\E_P[\mu(C^*_n(X_{n+1}))] \leq \E_P[\mu(C_n(X_{n+1}))].
\]
This completes the proof.

\subsection{Further details for Figure \ref{fig:efficient_score}}
\label{app:efficient_score} 

We generated $n = 500$ i.i.d.\ samples $(X_i,Y_i)$, $i=1,\dots,n$, where each
$X_i \sim \mathrm{Unif}[0,2\pi]$, and   
\[
Y_i|X_i \sim N( f(X_i), \sigma^2(X_i)), \quad \text{where $f(x) = \sin(x)$ and
  $\sigma(x) = \pi x /30$}.
\]
These are the gray dots in Figure \ref{fig:efficient_score}. For both the
efficient score \eqref{eq:efficient_score} and studentized score
\eqref{eq:student_score}, we used the true mean and variance functions, i.e., 
\smash{$\hf = f$} and \smash{$\hat\sigma = \sigma$}. We then pursued the
conformal construction in \eqref{eq:cp_set}, \eqref{eq:cp_score} at the coverage
level $1-\alpha = 0.9$.   

In this setup, due to our use of oracle mean and variance functions (no model  
training), the score \smash{$s_i^y$} does not depend on $y$ for $i \leq n$, for
either studentized or efficient score functions. This allows us to compute the
conformal sets in closed-form. For studentized score \eqref{eq:student_score}, 
this is    
\[
C_n(x) = [ f(x) - \sigma(x) \hat{q}_n, \, f(x) + \sigma(x) \hat{q}_n ],
\]
where \smash{$\hat{q}_n$} is the $\lceil (n+1) (1-\alpha) \rceil\th$ smallest of 
$s_1,\dots,s_n$. For the efficient score \eqref{eq:efficient_score}, this is 
\[
C_n(x) = \Big[ f(x) - \sigma(x) \sqrt{2(\hat{q}_n - \log\sigma(x))}, \,  
f(x) +  \sigma(x) \sqrt{2(\hat{q}_n - \log\sigma(x))} \Big],
\]
where again \smash{$\hat{q}_n$} is the $\lceil (n+1) (1-\alpha) \rceil\th$
smallest of $s_1,\dots,s_n$. 

The form above makes it clear why, for the most part, the efficient intervals
(red) are narrower than the studentized intervals (blue) in Figure
\ref{fig:efficient_score}. This is true except in regions where $\sigma(x)$ is
very small, in which case $\log\sigma(x) < 0$. In this simulation, the
studentized intervals are on an average about 12.5\% wider than the efficient 
intervals.   
\end{document}